\algnewcommand{\IfThenElse}[3]{
  \State \algorithmicif\ #1\ \algorithmicthen\ #2\ \algorithmicelse\ #3}
\algnewcommand{\IIf}[1]{\State\algorithmicif\ #1\ \algorithmicthen}
\algnewcommand{\EndIIf}{\unskip\ \algorithmicend\ \algorithmicif}
\algrenewcommand\algorithmicrequire{\textbf{Input:}}
\algrenewcommand\algorithmicensure{\textbf{Output:}}
\newtheorem{theorem}{Theorem}
\newtheorem{remark}{Remark}
\newtheorem{proposition}{Proposition}
\newtheorem{corollary}{Corollary}
\begin{document}

\title{\LARGE \bf Line-Search Filter Differential Dynamic Programming for Optimal Control with Nonlinear Equality Constraints}


\author{Ming Xu$^{1}$, Stephen Gould$^{2}$ and Iman Shames$^{3}$
\thanks{$^{1}$Ming Xu is with the School of Computer and Communication Sciences, EPFL (e-mail: mingda.xu@epfl.ch). }
\thanks{$^{2}$Stephen Gould is with the School of Computing at the Australian National University (e-mail: stephen.gould@anu.edu.au). }
\thanks{$^{3}$Iman Shames is with the Department of Electrical and Electronic Engineering, the University of Melbourne (email: iman.shames@unimelb.edu.au).}
\thanks{This work was supported by the Australian Research Council under grant DP250101763 and the United States Air Force Office of Scientific Research
under Grant No. FA2386-24-1-4014.}}







\maketitle

\begin{abstract}
We present FilterDDP, a differential dynamic programming algorithm for solving discrete-time, optimal control problems (OCPs) with nonlinear equality constraints. Unlike prior methods based on merit functions or the augmented Lagrangian class of algorithms, FilterDDP uses a step filter in conjunction with a line search to handle equality constraints. We identify two important design choices for the step filter criteria which lead to robust numerical performance: 1) we use the Lagrangian instead of the cost in the step acceptance criterion and, 2) in the backward pass, we perturb the value function Hessian. Both choices are rigorously justified, for 2) in particular by a formal proof of local quadratic convergence. In addition to providing a primal-dual interior point extension for handling OCPs with both equality and inequality constraints, we validate FilterDDP on three contact implicit trajectory optimisation problems which arise in robotics.
\end{abstract}


\section{Introduction}

Discrete-time optimal control problems (OCPs) are used in robotics for motion planning tasks such as minimum-time quadrotor flight \cite{foehnquadrotorscience}, autonomous driving \cite{gohdrivingmpc}, locomotion for legged robots \cite{wensingleggedsurvey, farshidianleggedmpc, grandiaperceptivelocomotion}, obstacle avoidance \cite{zengcollisionbarrier, zhangcollisionopt, marcucciconvexobstacles} and manipulation~\cite{xuenonprehensile, mouranonprehensile, Yang-RSS-24, hogannonprehensileijrr}. The differential dynamic programming (DDP) class of algorithms \cite{mayneddpbook} are of interest to the robotics community due to their efficiency, as well as other benefits for control, such as dynamic feasibility of iterates and a feedback policy provided as a byproduct of the algorithm \cite{grandiafeedbackddp, mastallinullspace}.

To date, extending DDP for nonlinear equality constraints is relatively underexplored. Equality constraints are required for OCPs involving inverse dynamics \cite{mastallinullspace, katayamainverse}, challenging contact implicit planning problems \cite{manchestercontactto, posalcpcontact, mouranonprehensile, howellopt, lecleachcimpc, Yang-RSS-24} and minimum time waypoint flight \cite{foehnquadrotorscience}. Most existing algorithms for handling equality constraints adopt an adaptive penalty approach based on the \emph{augmented Lagrangian} (AL) \cite{eqddpkazdadi, proxddp, altro}. However, AL methods yield limited formal convergence guarantees \cite[Ch. 17]{nwopt}, motivating a DDP algorithm based on a framework with stronger convergence properties, i.e., a line-search filter approach \cite{wachterglobal, wachterlocal}, which is adopted in the popular nonlinear programming solver IPOPT \cite{ipopt}.

As a result, we propose FilterDDP, a DDP line-search filter algorithm which complements existing works based on merit functions for handling equality constraints \cite{mastallinullspace, prabhu2024differentialdynamicprogrammingstagewise}. We identify two important design choices when designing the filter algorithm which are essential for robust numerical performance: 1) we replace the cost with the Lagrangian as one of the two filter criterion and, 2) for the stopping criteria and backward pass Hessians, we replace the value function gradient with an estimated dual variable of the dynamics constraints. Both 1) and 2) are justified rigorously, with 2) via a proof of local quadratic convergence. We also validate 1) and 2) with an ablation study in the simulation results.

Another contribution of this paper relates to the aforementioned theoretical analysis of constrained DDP algorithms. 
First, we mathematically show that one of FilterDDP's sufficient decrease conditions is analogous to the Armijo condition \cite[pg. 33]{nwopt} in unconstrained optimisation.  
Second, we provide a proof of local quadratic convergence of the iterates within a neighbourhood of a solution. 
This proof generalises the existing proof of local quadratic convergence of unconstrained DDP for scalar states and controls \cite{liao1992advantages} to the the constrained, vector-valued setting. 
Our final contribution is a primal-dual interior point extension to FilterDDP for handling inequality constraints as well as equality constraints.

An implementation of FilterDDP is provided in the Julia programming language\footnote{https://github.com/mingu6/FilterDDP.jl}. Our numerical experiments evaluate FilterDDP on 300 OCPs derived from three challenging classes of motion planning problems with nonlinear equality constraints: 1) a contact-implicit cart-pole swing-up task with Coulomb friction \cite{howellopt}, 2) an acrobot swing-up task with joint limits enforced through a variational, contact-implicit formulation \cite{howellopt} and, 3) a contact-implicit non-prehensile pushing task \cite{mouranonprehensile}. Our experiments indicate that FilterDDP offers significant robustness gains and lower iteration count compared to the existing state-of-the-art equality-constrained DDP algorithms \cite{proxddp, mastallinullspace}, while being faster (and more amenable to embedded systems) compared to IPOPT \cite{ipopt}.

\subsection{Notation used in this paper}

Let $[N]=\{1,2,\dots,N\}$. A function $p(x):\mathcal{X}\rightarrow
\mathcal{P}$ for some sets $\mathcal{X}$ and $\mathcal{P}$ is defined as being
$O(\|x\|^d)$ for positive integer $d$ if there exists a scalar $C$ such that
$\|p(x)\| \leq C \|x\|^d$ for all $x\in\mathcal{X}$. A vector of ones of
appropriate size is denoted by $e$.  Denote the collection of indexed vectors
$\{y_k\}_{k=t}^{N}$ by $y_{t:N}$ and we use the convention $(x, u)
= (x^\top, u^\top)^\top$. Denote the element-wise product by $\odot$. Finally, we use the following convention for derivatives: If $f$ is a scalar valued function $f : \mathbb{R}^n \times \mathbb{R}^m \rightarrow \mathbb{R}$, then $\nabla_u f(x, u) \in \mathbb{R}^{1\times m}$ and $\nabla_{xu}^2 f(x, u) \in \mathbb{R}^{n\times m}$. If $g:\mathbb{R}^n\times \mathbb{R}^m \rightarrow \mathbb{R}^c$ is a vector valued function, then $\nabla_x g(x, u) \in \mathbb{R}^{c\times n}$ and $\nabla_{xu}^2 g(x, u) \in\mathbb{R}^{c \times n \times m}$. 

\section{Discrete-time Optimal Control}\label{sec:oc}

In this section, we provide some background on equality constrained discrete-time optimal control problems (OCPs).

\subsection{Problem Formulation}

We consider OCPs with a finite-horizon length $N$ with equality constraints, which can be expressed in the form
\begin{equation}\label{eq:oc}
\begin{array}{rl}
    \underset{\mathbf{x}, \mathbf{u}}{\text{minimise}}  & J(x_{1:N}, u_{1:N}) \coloneqq \sum_{t=1}^{N} \ell(x_t, u_t) \\
    \text{subject to} & x_1 = \hat{x}_1, \\
    & x_{t+1} = f(x_t, u_t) \quad \text{for } t \in [N-1], \\
    & c(x_t, u_t) = 0  \quad \text{for } t \in [N],
\end{array}
\end{equation} where $x_{1:N}$ and $u_{1:N}$ are a trajectory of states and control inputs,
respectively, and we assume $x_t\in\mathbb{R}^{n_x}$ and $u_t\in\mathbb{R}^{n_u}$ for all $t$. The costs are denoted by
$\ell:\mathbb{R}^{n_x} \times\mathbb{R}^{n_u} \rightarrow \mathbb{R}$ and the constraints are denoted by $c:\mathbb{R}^{n_x}\times \mathbb{R}^{n_u} \rightarrow \mathbb{R}^{n_c}$
where $n_c \leq n_u$. The mapping $f:\mathbb{R}^{n_x} \times\mathbb{R}^{n_u} \rightarrow \mathbb{R}^{n_x}$ captures the discrete time system dynamics and
$\hat{x}_1$ is the initial state. We require that $\ell, f, c$ are twice continuously differentiable.

\subsection{Optimality Conditions}\label{ssec:kkteq}

The Lagrangian of the OCP in~\eqref{eq:oc} is given by
\begin{equation}\label{eq:lagrangian}
    \begin{aligned}
    \mathcal{L}(\mathbf{w}, \bm{\lambda}) \coloneqq  \lambda_1^\top (\hat{x}_1 & - x_1)  + \textstyle\sum_{t=1}^{N} L(x_t, u_t, \phi_t) \\
    &  + \textstyle\sum_{t=1}^{N-1} \lambda_{t+1}^\top (f(x_t, u_t) - x_{t+1}) ,
    \end{aligned}
\end{equation}
where $\phi_{1:N}$ and $\bm{\lambda} \coloneqq \lambda_{1:N}$ are the dual variables for the equality and dynamics constraints in~\eqref{eq:oc}, respectively.  
Furthermore, $w_t = (x_t, u_t, \phi_t)$, $\mathbf{w}\coloneqq w_{1:N}$ and finally, $L(x_t, u_t, \phi_t) \coloneqq \ell(x_t, u_t) + \phi_t^\top c(x_t, u_t)$. 

The first-order optimality conditions for \eqref{eq:oc} (also known as Karush-Kuhn-Tucker (KKT) conditions), necessitate that trajectories $x_{1:N}, u_{1:N}$ can only be a local solution of \eqref{eq:oc} if there exist dual variables $\lambda_{1:N}$ and $\phi_{1:N}$ such that
\begin{subequations}\label{eq:kkt}
    \begin{gather}
        \begin{split}
            \nabla_{x_t} \mathcal{L}(\mathbf{w}, \bm{\lambda}) = L^t_x - \lambda_t^\top  + \lambda_{t+1}^\top f^t_x = 0, 
        \end{split}\label{eq:kktx} \\
        \nabla_{u_t} \mathcal{L}(\mathbf{w}, \bm{\lambda}) = L^t_u + \lambda_{t+1}^\top f^t_u = 0,
        \label{eq:kktu}\\
        \nabla_{\lambda_t} \mathcal{L}(\mathbf{w}, \bm{\lambda}) = \begin{cases}
            (\hat{x}_1 - x_t)^\top = 0 & t = 1\\
            (f(x_{t-1}, u_{t-1}) - x_{t})^\top  =  0 & t > 1
        \end{cases}, \label{eq:kktdyn}\\
        \nabla_{\phi_t} \mathcal{L}(\mathbf{w}, \bm{\lambda}) = c(x_t, u_t)^\top =  0,\label{eq:kktstage}
    \end{gather}
\end{subequations}
where $L_x^t, L_u^t, f_x^t, f_u^t$ are partial derivatives of $L$ and $f$ at $w_t$ and noting that for $t=N$, we implicitly set $\lambda_{t+1} = 0$ since there are no dynamics constraints for step $t=N$.

\section{Background and Derivation of DDP}\label{sec:backgroundddp}

We now provide some background on the derivation of our constrained differential dynamic programming algorithm. 

\subsection{Dynamic Programming for Optimal Control}\label{ssec:dpforocp}

Optimal control problems are amenable to being solved using \emph{dynamic programming} (DP), which relies on Bellman's principle of optimality \cite{bellman1965dynamic}. Concretely, for the OCP in \eqref{eq:oc}, the optimality principle at time step $t\in[N]$ yields
\begin{equation}\label{eq:dynamicprogramming} 
    \begin{array}{rl}
    V_t^\star(x_t) \coloneqq \underset{u_t}{\text{min}}   & \ell(x_t, u_t) + V^\star_{t+1} (f(x_t, u_t)) \\
    \text{s.t.} & c(x_t, u_t) = 0,
    \end{array}
\end{equation}
where $V_t^\star$ is the \emph{optimal value function} with boundary condition $V_{N+1}^\star(x) \coloneqq 0$. By duality, \eqref{eq:dynamicprogramming} is equivalent to
\begin{equation}\label{eq:dynamicprogrammingminmax}
    V_t^\star(x_t) \coloneqq \underset{u_t}{\text{min}} \, \underset{\phi_t}{\text{max}} \,   L(x_t, u_t, \phi_t) + V^\star_{t+1} (f(x_t, u_t)),
\end{equation}
implying that, 1) we can decompose \eqref{eq:oc} into a sequence of sub-problems, which are solved recursively backwards in time from $t=N$ and,  2) $V_t^\star(x_t)$ is the optimal value of the Lagrangian of the subsequence starting at step $t$ and state $x_t$.

\subsection{Differential Dynamic Programming}\label{ssec:backgroundddp}

A core idea used to derive the FilterDDP algorithm, analogous to \cite{pavlov_ipddp}, is to replace the intractable \emph{optimal} value function $V_t^\star$ with a tractable \emph{sub-optimal} $V^t$. As a result, the intractable problems defined in \eqref{eq:dynamicprogrammingminmax} are replaced with
\begin{equation}\label{eq:ipddpminmax}
    \underset{u_t}{\text{min}}  \, \underset{\phi_t}{\text{max}} \,   Q^t(x_t, u_t, \phi_t),
\end{equation}
where $Q^t(x_t, u_t, \phi_t) \coloneqq L(x_t, u_t, \phi_t)  + V^{t+1} (f(x_t, u_t))$. 

The FilterDDP algorithm alternates between a backward pass and forward pass phase as in unconstrained DDP \cite{mayneddpbook}. The backward pass applies a \emph{perturbed} Newton step to \eqref{eq:ipddpminmax} to determine the nonlinear update rule applied in the forward pass. We defer formally defining $V^t$ to Sec. \ref{ssec:value}.

\section{Filter Differential Dynamic Programming}\label{sec:filterddp}

We now describe the full FilterDDP algorithm. FilterDDP generates a sequence of iterates $\{(\mathbf{w}_k, \bm{\lambda}_k)\}$ by alternating between a \textit{backward pass} and \textit{forward pass}.

\subsection{Termination Criterion}\label{ssec:termination}

Let $w_{t} \coloneqq (x_{t}, u_{t}, \phi_{t})$ and $\lambda_t$ together, represent the current iterate at time step $t$. The optimality error used to determine convergence, letting $c^{t} \coloneqq c(x_{t}, u_{t})$, is defined to be
\begin{equation}\label{eq:opterr}
    \begin{aligned}
    E(\mathbf{w}, \bm{\lambda}) \coloneqq \max_t \max \big\{ \|\nabla_{u_t}\mathcal{L}(\mathbf{w}, \, \bm{\lambda}) \|_\infty,
    \|c^t\|_\infty \big\}.
    \end{aligned}
\end{equation}
The algorithm terminates successfully if $E(\mathbf{w}, \bm{\lambda}) <
\epsilon_\text{tol}$ for a user-defined tolerance $\epsilon_\text{tol} > 0$. We omit $\nabla_{x_t}\mathcal{L}$ in \eqref{eq:opterr} since for the iterates under the FilterDDP algorithm, it will be 0 under the definition of $\lambda_t$ to be presented in \eqref{eq:vxupdate}.

\subsection{Backward Pass}\label{ssec:backward}

The backward pass computes the first and perturbed second derivatives of $V^t$ around the current iterate $\bar{\mathbf{w}}$, denoted by $\bar{V}^t_x\in\mathbb{R}^{1\times n_x}$ and $P_t\in\mathbb{R}^{n_x \times n_x}$, backwards in time. Next, an update rule is derived using a \emph{perturbed} Newton step applied to the stationarity condition of \eqref{eq:ipddpminmax}. The perturbation allows us to establish local convergence of FilterDDP in Sec. \ref{sec:proofconvergence}.

\paragraph*{Notation} For a function $h \in \{\ell, f, c, L, Q^t\}$ and $y \in \{x, u\}$, let $\bar{h}^{t}_{y} \coloneqq \nabla_{y} h(\bar{x}_t, \bar{u}_t, \bar{\phi}_t)$. We use similar notation for second derivatives of $h$. Furthermore, let $\bar{h}^{t} \coloneqq h(\bar{x}_t, \bar{u}_t, \bar{\phi}_t)$.

The backward pass sets the boundary conditions
\begin{equation}\label{eq:vxboundary}
     \bar{\lambda}_{N+1} = 0_{1 \times n_x}, \,\,\, \bar{V}_x^{N+1} = 0_{1\times n_x}, \,\,\, P_{N+1} = 0_{n_x \times n_x},
\end{equation}
and proceeds backwards in time, alternating between the perturbed Newton step and updating $\bar{V}^t_x$, $P_t$ and $\bar{\lambda}_t$.

\paragraph{Update Rule}
We first present the unperturbed Newton step to the stationarity condition of \eqref{eq:ipddpminmax}, given by
\begin{equation}\label{eq:ipddpminmaxkkt}
    \nabla_{u_t} Q^t(x_t, u_t, \phi_t) = 0, \quad c(x_t, u_t) = 0.
\end{equation}
\pagebreak
Let $\delta w_t \coloneqq (\delta x_t, \delta u_t, \delta \phi_t)$ represent an update to $\bar{w}_t$. Newton's method applied to \eqref{eq:ipddpminmaxkkt} yields
\begin{equation}\label{eq:newtonminmax1}
    \begin{bmatrix}
        \bar{Q}_{uu}^t & A_t  \\
        A_t^\top & 0
    \end{bmatrix}
    \begin{bmatrix}
        \delta u_t \\ \delta \phi_t
    \end{bmatrix}
    = - \begin{bmatrix}
        (\bar{Q}_u^t)^\top \\ \bar{c}^t 
    \end{bmatrix} -
    \begin{bmatrix}
        \bar{Q}_{ux}^t \\ \bar{c}_x^t
    \end{bmatrix} \delta x_t,
\end{equation}
where $A_t \coloneqq (\bar{c}_u^t)^\top$, $\bar{Q}_u^t = \bar{L}_u^t + \bar{V}_{x}^{t+1}\bar{f}_u^t$,
\begin{subequations}\label{eq:qfnderivs}
    \begin{gather}
    \bar{Q}_{uu}^t = \bar{L}_{uu}^t + (\bar{f}_u^t)^\top \bar{V}_{xx}^{t+1} \bar{f}_u^t + \bar{V}^{t+1}_{x} \cdot \bar{f}_{uu}^t, \label{eq:Quut}\\
    \bar{Q}_{ux}^t = \bar{L}_{ux}^t + (\bar{f}_u^t)^\top \bar{V}_{xx}^{t+1} \bar{f}_x^t + \bar{V}^{t+1}_{x} \cdot \bar{f}_{ux}^t, \label{eq:Quxt} \\
    \bar{Q}_{xx}^t = \bar{L}_{xx}^t + (\bar{f}_x^t)^\top \bar{V}_{xx}^{t+1} \bar{f}_x^t + \bar{V}^{t+1}_{x} \cdot \bar{f}_{xx}^t, \label{eq:Qxxt}
    \end{gather}
\end{subequations}
and $\cdot$ denotes a tensor contraction along the first dimension. Equation \eqref{eq:newtonminmax1} implies that $\delta u_t = \alpha_t + \beta_t \delta x_t$ and $\delta \phi_t = \psi_t + \omega_t \delta x_t$ for some parameters $\alpha_{t}, \beta_{t}$, $\psi_{t}$, $ \omega_{t}$. To determine the update rule, FilterDDP applies the perturbed Newton step given by
\begin{equation}\label{eq:bwkktfull}
    \underbrace{\begin{bmatrix}
        H_t + \delta_w I & A_t \\
        A_t^\top & -\delta_c I
    \end{bmatrix}}_{K_t}
    \begin{bmatrix}
        \alpha_t & \beta_t \\ \psi_t & \omega_t
    \end{bmatrix}
     = -\begin{bmatrix}
         (\bar{Q}_u^t)^\top & B_t \\ \bar{c}^t & \bar{c}_x^t
     \end{bmatrix},
\end{equation}
where $H_t$, $B_t$, $C_t$ are given by
\begin{subequations}\label{eq:bwhessperturbed}
    \begin{align}
    H_t &\coloneqq \bar{L}_{uu}^t + (\bar{f}_u^t)^\top P_{t+1} \bar{f}_u^t + \bar{\lambda}_{t+1} \cdot \bar{f}_{uu}^t, \label{eq:Ht} \\
    B_t &\coloneqq \bar{L}_{ux}^t + (\bar{f}_u^t)^\top P_{t+1} \bar{f}_x^t + \bar{\lambda}_{t+1} \cdot \bar{f}_{ux}^t, \label{eq:Bt} \\
    C_t &\coloneqq \bar{L}_{xx}^t + (\bar{f}_x^t)^\top P_{t+1} \bar{f}_x^t + \bar{\lambda}_{t+1} \cdot \bar{f}_{xx}^t \label{eq:Ct}
    \end{align}
\end{subequations}
and the recursive update of $P_t$ is described in \eqref{eq:vxxupdate}.

Motivated by global convergence \cite{wachterglobal}, we set $\delta_w, \delta_c \geq 0$ following Alg. IC in \cite{ipopt} so that $K_t$ has an inertia\footnote{The inertia is the number of positive, negative and zero eigenvalues.} of $(n_u, n_c, 0)$. We factorize $K_t$ and recover its inertia using an LDLT factorization \cite{bunchkaufman, rookpivot}.

\paragraph{Updating the Value Function Approximation}

After solving \eqref{eq:bwkktfull}, $\bar{V}_x^t, P_t$ and $\bar{\lambda}_t$ are updated according to 
\begin{subequations}\label{eq:valueddpupdate}
    \begin{gather}
    \bar{V}_x^t = \bar{Q}_x^t + \bar{Q}_u^t \beta_t + (\bar{c}^t)^\top\omega_t, \quad \bar{\lambda}_{t}^\top = \bar{L}_x^t + \bar{\lambda}^\top_{t+1} \bar{f}_x^t\label{eq:vxupdate} \\
    P_t = C_t + \beta_t^\top H_t \beta_t + B_t^\top \beta_t + \beta_t^\top B_t. \label{eq:vxxupdate}
    \end{gather}
\end{subequations}
The time step is then decremented, i.e., $t \gets t-1$ and \emph{a)} is repeated. We will defer defining the value function $V^t$ from which $\bar{V}_x^t$ and $\bar{V}_{xx}^t$ are derived until Sec. \ref{ssec:value}, since the forward pass in Sec. \ref{ssec:forward} must be described first.

\paragraph{Differences to prior works e.g., \cite{proxddp, pavlov_ipddp, altro}} Both the perturbed Newton step \eqref{eq:bwkktfull} and replacing $\|\bar{Q}_u^t\|_\infty$ with \eqref{eq:opterr} for the termination criteria are novel inclusions of FilterDDP, motivated by the convergence result in Sec. \ref{sec:proofconvergence}.

\subsection{Forward Pass}\label{ssec:forward}

The forward pass generates trial points for the next iterate. For a step size $\gamma \in (0, 1]$, the trial point is given by applying
\begin{equation}\label{eq:forwardpass}
    \begin{gathered}
        x^+_{t+1} \coloneqq f(x^+_t, u^+_t(x_t^+, \gamma)), \quad x^+_1 = \hat{x}_1,\\
        u_t^+(x_t^+, \gamma) \coloneqq \bar{u}_t + \gamma \alpha_t + \beta_t(x^+_t - \bar{x}_t), \\
        \phi_t^+(x_t^+, \gamma) \coloneqq \bar{\phi}_t + \gamma\psi_t + \omega_t (x^+_t - \bar{x}_t),
    \end{gathered}
\end{equation}
forward in time starting from $t=1$. A trial point $\mathbf{w}^+(\gamma)$, where $w_t^+(\gamma) \coloneqq (x_t^+, u_t^+(x_t^+, \gamma), \phi_t^+(x_t^+, \gamma))$ is accepted as the next iterate if step acceptance criteria described below are satisfied. The backtracking line search procedure sets $\gamma = 1$, setting $\gamma \gets \frac{1}{2}\gamma$ if the trial point $\mathbf{w}^+(\gamma)$ is not accepted.


\emph{\,\, b) Step acceptance:} A trial point must yield a sufficient decrease in \emph{either} the Lagrangian of \eqref{eq:oc} or the constraint violation compared to the current iterate, concretely,
\begin{equation}\label{eq:filtercriteria}
        \mathcal{L}(\mathbf{w}) \coloneqq  \sum_{t=1}^{N} L(x_t, u_t, \phi_t), \quad \theta(\mathbf{w}) \coloneqq \sum_{t=1}^N \|c^t\|_1.
\end{equation}
We adopt the sufficient decrease condition given by
\begin{subequations}\label{eq:sufficientfilter}
    \begin{gather}
        \theta(\mathbf{w}^+(\gamma)) \leq (1 - \gamma_\theta) \theta(\bar{\mathbf{w}}) \quad \text{or} \label{eq:suffdecreaseconstr}\\
            \mathcal{L}(\mathbf{w}^+(\gamma)) \leq \mathcal{L}(\bar{\mathbf{w}}) - \gamma_\mathcal{L} \theta(\bar{\mathbf{w}}),
        \label{eq:suffdecreasecost}
    \end{gather}
\end{subequations}
for constants $\gamma_\theta, \gamma_\mathcal{L} \in (0, 1)$. Furthermore, following \cite{wachterglobal, ipopt}, \eqref{eq:sufficientfilter} is replaced by enforcing a sufficient decrease condition on $\mathcal{L}$ if both $\theta(\bar{\mathbf{w}}) < \theta_\mathrm{min}$ for some small $\theta_\mathrm{min}$ and the \emph{switching condition}
\begin{equation}\label{eq:switching}
    \gamma m < 0 \quad \text{and} \quad (-\gamma m)^{s_\mathcal{L}}\gamma^{1 - s_\mathcal{L}} > \delta (\theta(\bar{\mathbf{w}}))^{s_\theta}
\end{equation}
holds, where $\delta > 0$, $s_\theta > 1$, $s_\mathcal{L} \geq 1$ are constants and
\begin{equation}\label{eq:expecteddecrease}
    \begin{aligned}
        m &\coloneqq \sum_{t=1}^{N} \left(\bar{Q}_u^t \alpha_t  + \psi_t^\top \bar{c}^t \right).
    \end{aligned}
\end{equation}
The switching condition \eqref{eq:switching} is motivated by the global convergence analysis of \cite{wachterglobal}. The aforementioned decrease condition on $\mathcal{L}$ is given by 
\begin{equation}\label{eq:armijo}
    \mathcal{L}(\mathbf{w}^+(\gamma)) \leq \mathcal{L}(\bar{\mathbf{w}}) + \eta_\mathcal{L} \gamma m
\end{equation}
for some small $\eta_\mathcal{L} > 0$. Accepted iterates which satisfy~\eqref{eq:switching} and~\eqref{eq:armijo} are called $\mathcal{L}$-type iterations~\cite{ipopt}. 

FilterDDP maintains a \emph{filter}, denoted by $\mathcal{F} \subseteq \{(\theta, \mathcal{L}) \in \mathbb{R}^2 : \theta \geq 0\}$. The set $\mathcal{F}$ defines a ``taboo" region for iterates, and a trial point is rejected if $(\theta^+,  \mathcal{L}^+) \in \mathcal{F}$. We initialise the filter with $\mathcal{F} = \{(\theta,  \mathcal{L}) \in \mathbb{R}^2 : \theta \geq \theta_{\mathrm{max}}\}$ for some $\theta_\mathrm{max}$. After a trial point $\mathbf{w}^+(\gamma)$ is accepted, the filter is augmented if either~\eqref{eq:switching} or~\eqref{eq:armijo} do not hold, using
\begin{equation}\label{eq:filterupdate}
    \begin{gathered}
        \mathcal{F}^+ \coloneqq \mathcal{F} \cup \big\{  (\theta,  \mathcal{L}) \in \mathbb{R}^2 : \theta \geq (1 - \gamma_\theta) \theta(\bar{\mathbf{w}}),\\ 
     \quad\quad\quad \mathcal{L} \geq  \mathcal{L}(\bar{\mathbf{w}}) - \gamma_\mathcal{L} \theta(\bar{\mathbf{w}}) \big\}.
    \end{gathered}
\end{equation}
\paragraph{Differences to \cite{pavlov_ipddp}, \cite{mastallinullspace}} In contrast to existing non-AL DDP methods, FilterDDP uses the \emph{Lagrangian} within the step acceptance criteria instead of the cost. We motivate this by formally showing in Sec. \ref{ssec:value} that \eqref{eq:armijo} is analogous to the well-known Armijo condition \cite[pg. 33]{nwopt}
. We note that global convergence of line-search filter methods for general nonlinear programs is preserved under this change \cite[Sec. 4.1]{wachterglobal}.

\subsection{Complete FilterDDP Algorithm}
Below, we present the full FilterDDP algorithm.

\textbf{Algorithm 1:}

\textit{Given:} Starting point $\mathbf{w}_0$; $\theta_\mathrm{max}\in (0, \infty]$; $\gamma_\theta, \gamma_\mathcal{L} \in (0, 1)$; $\epsilon_\mathrm{tol}, \delta, \gamma^{\min}, \text{max\_iters} > 0$; $s_\theta > 1$; $s_\mathcal{L} \geq 1$; $\eta_\mathcal{L} \in (0, \frac{1}{2})$.
\begin{enumerate}[label*=\arabic*.]
    \item \textit{Initialise.} \,\, Initialise the iteration counter $k \gets 0$ and filter $\mathcal{F}_k \coloneqq \left\{ (\theta, \mathcal{L}) \in \mathbb{R}^2 : \theta \geq \theta_{\max} \right\}$.
    \item \label{alg:backward} \textit{Backward pass.} \,\, Set value function boundary conditions, i.e., \eqref{eq:vxboundary} and set time counter $t \gets N$.
    \begin{enumerate}[label*=\arabic*.]
        \item If $t = 0$, go to step \ref{alg:checkconverge} \label{alg:beginbw}
        \item Compute blocks in \eqref{eq:bwkktfull}, i.e., $H_t$, $B_t$, etc. using $\mathbf{w}_k$, $\bm{\lambda}_k$ and solve for $\alpha_{t}, \beta_{t}, \psi_{t}$, $\omega_{t}$. If the matrix in \eqref{eq:bwkktfull} is too ill conditioned, terminate the algorithm. \label{alg:restofrombw}
        \item Update $\bar{V}_x^{t}$, $P_{t}$, $\bar{\lambda}_{t}$ using \eqref{eq:valueddpupdate}.
        \item Set $t \gets t - 1$ and go to \ref{alg:beginbw}
    \end{enumerate}
    \item \textit{Check convergence.} \,\, If $E(\mathbf{w}_{k}$, $\bm{\lambda}_k) < \epsilon_\mathrm{tol}$ then STOP (success). If $k = \text{max\_iters}$ then STOP (failure). \label{alg:checkconverge}
    \item \label{alg:backtrackingls} \textit{Backtracking line search.} \,\, 
        \begin{enumerate}[label*=\arabic*.]
            \item \textit{Initialise line search.} \,\, Set $l \gets 0$ and $\gamma_{l} = 1$. \label{alg:commencels}
            \item \textit{Compute new trial point.} \,\, If the trial step size $\gamma_{l}$ becomes too small, i.e., $\gamma_{l} < \gamma^{\min}$, terminate the algorithm. Otherwise, compute the new trial point $\mathbf{w}^+(\gamma_{l})$ using the forward pass, i.e., \eqref{eq:forwardpass}.  \label{alg:propnewtrialpoint}
            \item \textit{Check acceptability to the filter.} \,\, If \eqref{eq:sufficientfilter} does not hold, reject the trial step size and go to step \ref{alg:newstepsize}
            \item \textit{Check sufficient decrease against the current iterate.} \,\,
                \begin{enumerate}[label*=\arabic*.]
                    \item \textit{Case} I: $\gamma_{l}$ is a $\mathcal{L}$-step size (i.e., \eqref{eq:switching} holds). If the Armijo condition \eqref{eq:armijo} holds, accept the trial step and go to step \ref{alg:accepttrial} Otherwise, go to step \ref{alg:newstepsize}
                    \item \textit{Case} II: $\gamma_{l}$ is not a $\mathcal{L}$-step size, (i.e., \eqref{eq:switching} is not satisfied): If \eqref{eq:sufficientfilter} holds, accept the trial step and go to step \ref{alg:accepttrial} Otherwise, go to step \ref{alg:newstepsize}
                \end{enumerate}
        \end{enumerate}
        \item \textit{Choose new trial step size.} Choose $\gamma_{l+1} = \frac{1}{2} \gamma_{l}$, set $l \gets l+1$ and go back to step \ref{alg:propnewtrialpoint} \label{alg:newstepsize}
        \item \textit{Accept trial point.} Set $\gamma \coloneqq \gamma_{l}$ and $\mathbf{w}_{k+1} \coloneqq \mathbf{w}_{k}^+(\gamma)$. \label{alg:accepttrial}
        \item \textit{Augment filter if necessary.} \,\, If $k$ is not a $\mathcal{L}$-type iteration, augment the filter using \eqref{eq:filterupdate}; otherwise leave the filter unchanged, i.e., set $\mathcal{F}_{k+1} \coloneqq \mathcal{F}_k$. 
        \item \textit{Continue with next iteration.} \,\, Increase the iteration counter $k \gets k+1$ and go back to step \ref{alg:backward} \label{alg:itercounterincre}
\end{enumerate}

\subsection{Definition of the Sub-Optimal Value Function $V^t$}\label{ssec:value}

We conclude the section by presenting a convenient candidate for the value function $V^t$ whose derivatives satisfies \eqref{eq:valueddpupdate}. In doing so, we also provide intuition on \eqref{eq:valueddpupdate} and \eqref{eq:armijo}.

\begin{theorem}
    A value function $V^t$ which has associated first and second-order derivatives given by \eqref{eq:vxupdate} and $\bar{V}_{xx}^t = \bar{Q}_{xx}^t + \beta_t^\top \bar{Q}_{uu}^t \beta_t + \bar{Q}_{xu}^t \beta_t + \beta_t^\top \bar{Q}_{ux}^t$ (i.e., unperturbed \eqref{eq:vxxupdate}), respectively, is given by $V^{N+1}(x, \gamma) \coloneqq 0$, and
    \begin{equation}\label{eq:truevalue}
        V^t(x, \gamma) \coloneqq Q^t(x, u_t^+(x, \gamma) , \phi_t^+(x, \gamma)).
\end{equation}
Note that $V^1(\hat{x}_1, \gamma) = \mathcal{L}(\mathbf{w}^+(\gamma))$, i.e., the updated $\mathcal{L}$, and that $\bar{V}_x^t = \nabla_x V^t(\bar{x}_t, 0)$ and $\bar{V}_{xx}^t = \nabla_{xx}^2 V^t(\bar{x}_t, 0)$.
\end{theorem}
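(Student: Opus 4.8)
The plan is to prove the theorem's two assertions separately, each by a backward induction on $t$ anchored at $V^{N+1}\equiv 0$ (which also matches the boundary condition \eqref{eq:vxboundary}). The value identity $V^1(\hat x_1,\gamma)=\mathcal{L}(\mathbf{w}^{+}(\gamma))$ is the easier half: following the forward-pass trajectory of \eqref{eq:forwardpass} and abbreviating $u_t^{+}\coloneqq u_t^{+}(x_t^{+},\gamma)$, $\phi_t^{+}\coloneqq\phi_t^{+}(x_t^{+},\gamma)$, I would show inductively that $V^t(x_t^{+},\gamma)=\sum_{s=t}^{N}L(x_s^{+},u_s^{+},\phi_s^{+})$. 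For the inductive step, \eqref{eq:truevalue} gives $V^t(x_t^{+},\gamma)=Q^t(x_t^{+},u_t^{+},\phi_t^{+})$ since $u_t^{+}(x_t^{+},\gamma)=u_t^{+}$; expanding $Q^t=L+V^{t+1}\!\circ f$ and using $x_{t+1}^{+}=f(x_t^{+},u_t^{+})$ turns this into $L(x_t^{+},u_t^{+},\phi_t^{+})+V^{t+1}(x_{t+1}^{+},\gamma)$, which the induction hypothesis collapses. Taking $t=1$ with $x_1^{+}=\hat x_1$ and recalling \eqref{eq:filtercriteria} finishes it; note this part holds for every $\gamma\in(0,1]$ and uses nothing about optimality or the Hessian perturbation.

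For the derivative identities I would evaluate at the current iterate, i.e., at $x=\bar x_t$ with the step size in the policy frozen at $\gamma=0$, so that $u_t^{+}(\bar x_t,0)=\bar u_t$, $\phi_t^{+}(\bar x_t,0)=\bar\phi_t$, and every derivative of $Q^t$ below is taken at $\bar w_t$, consistent with the bar notation. The induction hypothesis is that $\nabla_x V^{t+1}$ and $\nabla^2_{xx}V^{t+1}$ at $(\bar x_{t+1},0)$ equal $\bar V_x^{t+1}$ from \eqref{eq:vxupdate} and the unperturbed $\bar V_{xx}^{t+1}$. Since $u_t^{+}(\cdot,0)$ and $\phi_t^{+}(\cdot,0)$ are affine in $x$ with constant Jacobians $\beta_t$ and $\omega_t$, the chain rule on \eqref{eq:truevalue} gives $\nabla_x V^t(\bar x_t,0)=\bar Q_x^t+\bar Q_u^t\beta_t+\bar Q_\phi^t\omega_t$, and — affine maps contributing no second-order term — $\nabla^2_{xx}V^t(\bar x_t,0)=G^\top H^{Q}G$, where $H^{Q}$ is the full Hessian of $Q^t$ at $\bar w_t$ and $G$ stacks $I$, $\beta_t$, $\omega_t$ vertically. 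Because $L$, hence $Q^t$, is affine in $\phi$ with coefficient $c$, one has $\bar Q_\phi^t=(\bar c^t)^\top$, $\bar Q_{\phi\phi}^t=0$, $\bar Q_{\phi x}^t=\bar c_x^t$ and $\bar Q_{\phi u}^t=\bar c_u^t=A_t^\top$; substituting these, the first-order expression reads exactly as \eqref{eq:vxupdate}, while the second-order expression splits into $\bigl(\bar Q_{xx}^t+\bar Q_{xu}^t\beta_t+\beta_t^\top\bar Q_{ux}^t+\beta_t^\top\bar Q_{uu}^t\beta_t\bigr)$ plus the collected $\omega_t$-terms $(\bar c_x^t+A_t^\top\beta_t)^\top\omega_t+\omega_t^\top(\bar c_x^t+A_t^\top\beta_t)$. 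The latter vanishes because the second block-row of the (unperturbed, $\delta_c=0$) Newton system \eqref{eq:newtonminmax1} forces $A_t^\top\beta_t=-\bar c_x^t$, leaving precisely the unperturbed \eqref{eq:vxxupdate}. To close the recursion I would also check that the $Q^t$-blocks \eqref{eq:qfnderivs}, written through $\bar V_x^{t+1}$ and $\bar V_{xx}^{t+1}$, agree with the chain-rule derivatives of $V^{t+1}\!\circ f$ at $\bar w_t$; this is where dynamic feasibility of the iterate, $\bar x_{t+1}=f(\bar x_t,\bar u_t)$, enters, together with the hypothesis. (The $\bar\lambda_t$ recursion that also appears in \eqref{eq:vxupdate} is a separate definition and not part of this claim.)

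The part I expect to be delicate is the second-order bookkeeping: making the $\omega_t$ cross-terms cancel is exactly what ties the statement to the \emph{unperturbed} backward pass — with $\delta_c>0$ the same computation instead leaves a residual proportional to $\delta_c\,\omega_t^\top\omega_t$ — and one must track the tensor-contraction pieces $\bar V_x^{t+1}\!\cdot\bar f_{uu}^t$, $\bar V_x^{t+1}\!\cdot\bar f_{ux}^t$, $\bar V_x^{t+1}\!\cdot\bar f_{xx}^t$ carefully through the chain rule so that $H^{Q}$ reproduces \eqref{eq:qfnderivs} verbatim. A lesser but genuine care point is that the value identity is a statement for arbitrary $\gamma$, whereas the derivative identities concern the $\gamma=0$ slice, so the two halves of the proof carry slightly different evaluation conventions.
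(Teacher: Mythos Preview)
Your proposal is correct and follows essentially the same approach as the paper's own proof, which is a one-sentence instruction to apply the chain rule to \eqref{eq:truevalue} and evaluate at $x=\bar{x}_t$, $\gamma=0$. You have simply spelled out what that chain-rule computation entails --- in particular the cancellation of the $\omega_t$ cross-terms via $A_t^\top\beta_t=-\bar{c}_x^t$ from the unperturbed second block-row of \eqref{eq:newtonminmax1}, and the separate inductive verification of the note $V^1(\hat{x}_1,\gamma)=\mathcal{L}(\mathbf{w}^+(\gamma))$ --- none of which the paper writes out.
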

    
\begin{proof}
    For the first derivative,
    \begin{equation*}
    \begin{aligned}
        \nabla_x V^t(\bar{x}_t, 0) & \overset{\eqref{eq:truevalue}}{=} \bar{Q}^t_x + \bar{Q}_u^t \nabla_x u^+_t(\bar{x}_t, 0) + \bar{Q}^t_\phi \phi_t^+(\bar{x}_t, 0) \\\
        &\overset{\eqref{eq:forwardpass}}{=} \bar{Q}_x^t +\bar{Q}_u^t \beta_t + (\bar{c}^t)^\top \omega_t = \bar{V}_x^t.
    \end{aligned}
    \end{equation*}
    
    For the second deriative, let $y(x, \gamma)\coloneqq (x, u_t^+(x, \gamma), \phi_t^+(x, \gamma))$. It follows that
    
    \begin{equation*}
        \begin{aligned}
            \nabla_{xx}^2 V^t(\bar{x}_t, 0) &= \nabla_{xx}^2 Q^t(y(\bar{x}_t, 0)) \\
            &= (\nabla_x y(\bar{x}_t, 0))^\top \nabla_{yy}^2 Q^t(y(\bar{x}_t, 0)) \nabla_x y(\bar{x}_t, 0) \\
            & \quad \quad + \nabla_y Q^t(y(\bar{x}_t, 0)) \cdot \underbrace{\nabla_{xx}^2 y(\bar{x}_t, 0)}_{=0} \\
            &= \begin{bmatrix}
                I & \beta_t^\top & \omega_t^\top
            \end{bmatrix}
            \begin{bmatrix}
                \bar{Q}^t_{xx} & \bar{Q}_{xu}^t & (\bar{c}_x^t)^\top \\
                \bar{Q}_{ux}^t & \bar{Q}_{uu}^t & A_t \\
                \bar{c}_x^t & A_t^\top & 0
            \end{bmatrix}
            \begin{bmatrix}
                I \\ \beta_t \\ \omega_t
            \end{bmatrix} \\
            &= \bar{Q}_{xx}^t + \bar{Q}_{xu}^t \beta_t + (\bar{c}_x^t)^\top \omega_t + \beta_t^\top \bar{Q}_{ux}^t \\
            &\quad + \beta_t^\top \bar{Q}_{uu}^t \beta_t + \beta_t^\top A_t \omega_t + \omega_t^\top \bar{c}_x^t + \omega_t^\top A_t^\top \beta_t  \\
            &\overset{\eqref{eq:bwkktfull}}{=} \bar{Q}_{xx}^t + \beta_t^\top \bar{Q}_{uu}^t \beta_t + \bar{Q}_{ux}^t \beta_t + \beta_t^\top \bar{Q}_{ux}^t\\
            &= \bar{V}_{xx}^t,
        \end{aligned}
    \end{equation*}
    where \eqref{eq:bwkktfull} is used through the relation $A_t^\top \beta_t = -\bar{c}_x^t$.
\end{proof}

\begin{corollary}\label{cor:armijo}
    $m = \frac{d}{d\gamma} \mathcal{L}(\mathbf{w}^+(0))$, i.e., $m$ is similar to the directional derivative in the Armijo condition \cite[pg. 33]{nwopt}.
\end{corollary}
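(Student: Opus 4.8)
The plan is to differentiate the identity $V^1(\hat{x}_1, \gamma) = \mathcal{L}(\mathbf{w}^+(\gamma))$ from Theorem 1 with respect to $\gamma$ and evaluate at $\gamma = 0$, then show the resulting expression coincides with $m$ as defined in \eqref{eq:expecteddecrease}. Since $V^1$ is built recursively from the $V^t$ via \eqref{eq:truevalue}, the natural approach is to establish, by backward induction on $t$, a formula for $\frac{d}{d\gamma} V^t(\bar{x}_t, 0)$ in terms of the per-stage quantities $\bar{Q}_u^t \alpha_t + \psi_t^\top \bar{c}^t$, so that summing the telescoped contributions along the forward rollout recovers \eqref{eq:expecteddecrease}.

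First I would differentiate \eqref{eq:truevalue}, i.e. $V^t(x, \gamma) = Q^t(x, u_t^+(x,\gamma), \phi_t^+(x,\gamma))$, with respect to $\gamma$ at $x = \bar{x}_t$, $\gamma = 0$. Using the forward-pass definitions \eqref{eq:forwardpass}, at $\gamma = 0$ and $x = \bar{x}_t$ we have $u_t^+ = \bar{u}_t$, $\phi_t^+ = \bar{\phi}_t$, $\partial_\gamma u_t^+ = \alpha_t$ and $\partial_\gamma \phi_t^+ = \psi_t$; note that here $x$ is held fixed so the feedback terms $\beta_t(x_t^+ - \bar{x}_t)$ and $\omega_t(x_t^+ - \bar{x}_t)$ contribute nothing. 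Hence $\frac{d}{d\gamma} V^t(\bar{x}_t, 0) = \bar{Q}_u^t \alpha_t + \bar{Q}_\phi^t \psi_t$, and since $\bar{Q}_\phi^t = \nabla_\phi Q^t(\bar{w}_t) = (\bar c^t)^\top$ we get $\bar Q_\phi^t \psi_t = \psi_t^\top \bar c^t$ (these are scalars). The subtlety is that $Q^t$ itself depends on $\gamma$ through $V^{t+1}$, which appears inside $Q^t$ composed with the dynamics $f$; so the total derivative also picks up a term $\bar V_x^{t+1} \bar f_u^t \cdot (\text{sensitivity of } x_{t+1} \text{ to } \gamma)$ — but along the actual forward rollout this is exactly accounted for by differentiating $V^{t+1}$ at the next stage, and the chain-rule bookkeeping telescopes.

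The cleanest way to handle that bookkeeping is to work directly with the rolled-out trajectory: define $g(\gamma) \coloneqq \mathcal{L}(\mathbf{w}^+(\gamma))$ and write $g(\gamma) = \sum_{t=1}^N L(x_t^+, u_t^+, \phi_t^+)$ where $x_{t+1}^+ = f(x_t^+, u_t^+)$; differentiate using the chain rule, collect terms, and substitute the backward-pass recursions \eqref{eq:vxupdate} for $\bar\lambda_t$ (which satisfies the adjoint recursion $\bar\lambda_t = \bar L_x^t + \bar\lambda_{t+1}\bar f_x^t$) to cancel the state-sensitivity terms, leaving precisely $\sum_t (\bar Q_u^t \alpha_t + \psi_t^\top \bar c^t) = m$. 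Alternatively, invoke Theorem 1 directly: since $\frac{d}{d\gamma}V^1(\hat x_1,0)$ was just shown (stage $t=1$ of the induction) to equal the stage-$1$ contribution plus $\bar V_x^2 \bar f_u^1 \alpha_1$-type terms that unfold into the remaining stages, induction on the boundary condition $V^{N+1}\equiv 0$ closes the sum.

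The main obstacle I expect is the chain-rule accounting for how $\gamma$ propagates through the composed dynamics: at stage $t$ the state $x_t^+$ is itself a function of $\gamma$ (not held fixed as in Theorem 1's evaluation), so one must carefully separate the ``local'' sensitivity $\alpha_t$ from the ``propagated'' sensitivity $\partial_\gamma x_t^+$ and verify that the adjoint variables $\bar\lambda_t$ defined by \eqref{eq:vxupdate} are exactly what makes the propagated terms telescope to zero. Everything else — identifying $\bar Q_\phi^t = (\bar c^t)^\top$, reading off $\partial_\gamma u_t^+ = \alpha_t$ — is routine.
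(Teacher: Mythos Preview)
Your plan matches the paper's approach --- differentiate the recursive definition of $V^t$ with respect to $\gamma$ at fixed $x=\bar x_t$, obtain a stage contribution plus a recursive term, and telescope using $V^{N+1}\equiv 0$ --- but you misidentify the recursive term. The contribution $\bar V_x^{t+1}\bar f_u^t\,\alpha_t$ that you single out as the ``subtlety'' is \emph{already} contained in $\bar Q_u^t\alpha_t$, since by definition $\bar Q_u^t=\bar L_u^t+\bar V_x^{t+1}\bar f_u^t$. What is genuinely missing from your displayed formula $\frac{d}{d\gamma}V^t(\bar x_t,0)=\bar Q_u^t\alpha_t+\psi_t^\top\bar c^t$ is the partial of $V^{t+1}(f(\bar x_t,u_t^+),\gamma)$ with respect to its \emph{explicit} second argument, namely $\bar V_\gamma^{t+1}$ evaluated at $f(\bar x_t,\bar u_t)=\bar x_{t+1}$. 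The correct one-line recursion is
\[
\bar V_\gamma^t=\bar Q_u^t\alpha_t+\psi_t^\top\bar c^t+\bar V_\gamma^{t+1},
\]
and summing from $t=N$ down to $t=1$ with $\bar V_\gamma^{N+1}=0$ gives $\bar V_\gamma^1=m$ directly. This is the paper's entire proof.

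Because of this confusion you retreat to a second approach --- differentiating the rolled-out $\mathcal{L}(\mathbf{w}^+(\gamma))$ directly and using the adjoint recursion for $\bar\lambda_t$ to cancel the state-sensitivity terms $\partial_\gamma x_t^+$. That route is valid and would close, but it is unnecessary work: once you see that $x$ is held fixed at $\bar x_t$ when differentiating $V^t(\bar x_t,\gamma)$, there is no propagated state sensitivity to track at all, and no adjoint cancellation is needed. The recursion through $\bar V_\gamma^{t+1}$ does the telescoping for free.
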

\begin{proof}
    Differentiating \eqref{eq:truevalue} w.r.t. $\gamma$ and evaluating at $\gamma = 0$ and $x = \bar{x}$ yields $\bar{V}_\gamma^t = \bar{Q}_u^t \alpha_t + \psi_t^\top \bar{c}^t + \bar{V}_\gamma^{t+1}$. Applying an inductive argument yields $\bar{V}^1_\gamma = m$. Since $V^1(\hat{x}_1, \gamma) = \mathcal{L}(\mathbf{w}^+(\gamma))$, it follows that $\bar{V}_\gamma^1 = \frac{d}{d\gamma} \mathcal{L}(\mathbf{w}^+(0))$.
\end{proof}

Corollary \ref{cor:armijo} motivates using $\mathcal{L}$ for step acceptance in \eqref{eq:armijo}.

\section{Convergence Analysis of FilterDDP}\label{sec:proofconvergence}

In this section, we establish the local quadratic convergence of FilterDDP within a sufficiently small neighbourhood of a solution. We simplify our analysis by assuming all iterates with full step size $\gamma = 1$ are accepted by the filter criteria in Sec. \ref{ssec:forward}, leaving the more general analysis (with a second-order correction step \cite{wachterlocal}) for future work. We first establish that an optimal point $\mathbf{w}^\star$ satisfies KKT conditions \eqref{eq:kkt} if and only if it is a fixed point of FilterDDP. We then prove local quadratic convergence to $\mathbf{w}^\star$.

\textbf{Assumptions 1.} \,\, For all $t\in[N]$ there exists an open convex neighbourhood $\mathcal{N}_t$ containing $w_t^\star$ and the current iterate $\bar{w}_t$, such that for all $w_t \in \mathcal{N}_t$ and all $t\in[N]$,

\begin{enumerate}[label=(1\Alph*)]
    \item functions $\ell, f, c$ and their first derivatives are Lipschitz continuously differentiable; \label{asm:lipschitz}
    \item the minimum singular value of $A_t$ is uniformly bounded away from zero; and \label{asm:Atfullrank}
    \item $\xi^\top H_t\xi > 0$ for all $\xi$ in the null space of $A_t$. \label{asm:pdhessian}
\end{enumerate}

Assumption \ref{asm:pdhessian} ensures that no inertia correction is applied and that the iteration matrix $K_t$ is non-singular. Furthermore, Assumption \ref{asm:Atfullrank} holds under suitable constraint qualifications on \eqref{eq:oc} (e.g., LICQ \cite[Definition 12.4]{nwopt}).

\begin{theorem}\label{thm:stationary}
    Let $\bar{\mathbf{w}}$ be a feasible point which satisfies $\bar{x}_{t+1} = f(\bar{x}_t, \bar{u}_t)$, $\bar{x}_1 = \hat{x}_1$ and $c(\bar{x}_t, \bar{u}_t) = 0$. Then $\bar{\mathbf{w}}$ is a fixed point of FilterDDP (i.e., $\alpha_t=0$, $\psi_t = 0$ for all $t\in[N]$) if and only if it also satisfies KKT conditions \eqref{eq:kkt}.
\end{theorem}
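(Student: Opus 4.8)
The plan is to deduce both directions from a single structural fact: at a primal-feasible point the backward-pass costate $\bar{\lambda}_t$ defined in \eqref{eq:vxupdate} and the value gradient $\bar{V}_x^t$ satisfy the \emph{same} backward recursion, and that recursion is exactly the adjoint equation \eqref{eq:kktx}. I work under Assumptions 1, so that no inertia correction is triggered, i.e. $\delta_w = \delta_c = 0$ and the iteration matrix $K_t$ is non-singular; it is only this non-singularity of $K_t$ that the argument will actually use.

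First I would use primal feasibility to simplify the backward pass. Since $\bar{c}^t = c(\bar{x}_t,\bar{u}_t) = 0$ for all $t$, the feedforward columns of \eqref{eq:bwkktfull} reduce to $K_t\,(\alpha_t,\psi_t) = -\big((\bar{Q}_u^t)^\top,\, 0\big)$. Because $K_t$ is non-singular, the solution $(\alpha_t,\psi_t)$ vanishes if and only if its right-hand side does, i.e. if and only if $\bar{Q}_u^t = 0$. Hence ``$\bar{\mathbf{w}}$ is a fixed point'' is equivalent to ``$\bar{Q}_u^t = 0$ for every $t\in[N]$'' (and $\alpha_t=\psi_t=0$ for all $t$ indeed makes the forward pass \eqref{eq:forwardpass} return $\bar{\mathbf{w}}$ for any $\gamma$, since $x_1^+=\hat{x}_1=\bar{x}_1$ and dynamic feasibility then propagates $x_t^+=\bar{x}_t$). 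Moreover, whenever $\bar{Q}_u^t = 0$ and $\bar{c}^t = 0$, the update \eqref{eq:vxupdate} collapses to $\bar{V}_x^t = \bar{Q}_x^t = \bar{L}_x^t + \bar{V}_x^{t+1}\bar{f}_x^t$, which is precisely the recursion defining $\bar{\lambda}_t$.

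Next I would run a backward induction, whose base case is the boundary condition \eqref{eq:vxboundary}, $\bar{V}_x^{N+1} = 0 = \bar{\lambda}_{N+1}$. For the step, assume $\bar{V}_x^{t+1} = \bar{\lambda}_{t+1}$; then $\bar{Q}_u^t = \bar{L}_u^t + \bar{V}_x^{t+1}\bar{f}_u^t = \bar{L}_u^t + \bar{\lambda}_{t+1}\bar{f}_u^t = \nabla_{u_t}\mathcal{L}(\bar{\mathbf{w}},\bar{\bm{\lambda}})$, so $\bar{Q}_u^t = 0$ iff $\nabla_{u_t}\mathcal{L}(\bar{\mathbf{w}},\bar{\bm{\lambda}}) = 0$, and if either holds the collapse above gives $\bar{V}_x^t = \bar{Q}_x^t = \bar{\lambda}_t$, closing the induction. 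Direction ``fixed point $\Rightarrow$ KKT'': $\alpha_t=\psi_t=0$ for all $t$ gives $\bar{Q}_u^t=0$, hence $\nabla_{u_t}\mathcal{L}(\bar{\mathbf{w}},\bar{\bm{\lambda}})=0$, for all $t$; together with $\nabla_{x_t}\mathcal{L}(\bar{\mathbf{w}},\bar{\bm{\lambda}})=0$ (which holds for every iterate by the definition of $\bar{\lambda}_t$, as already noted after \eqref{eq:opterr}) and the assumed primal feasibility — which is exactly \eqref{eq:kktdyn}--\eqref{eq:kktstage} — the pair $(\bar{\mathbf{w}},\bar{\bm{\lambda}})$, with stage multipliers $\bar{\phi}_{1:N}$, satisfies all of \eqref{eq:kkt}. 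Direction ``KKT $\Rightarrow$ fixed point'': if $(\bar{\mathbf{w}},\bm{\lambda})$ satisfies \eqref{eq:kkt} for some $\bm{\lambda}$, then \eqref{eq:kktx} read backward from $\lambda_{N+1}=0$ is the same recursion producing $\bar{\bm{\lambda}}$, so $\bm{\lambda}=\bar{\bm{\lambda}}$; feeding $\nabla_{u_t}\mathcal{L}(\bar{\mathbf{w}},\bar{\bm{\lambda}})=0$ into the induction yields $\bar{Q}_u^t=0$ for all $t$, hence $(\alpha_t,\psi_t)=0$ for all $t$ by non-singularity of $K_t$.

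The main obstacle I expect is bookkeeping rather than conceptual depth: one must carefully keep apart three a priori distinct objects — the backward-pass costate $\bar{\lambda}_t$, the value gradient $\bar{V}_x^t$, and the KKT dynamics multiplier $\lambda_t$ — and verify that \emph{primal feasibility is exactly what annihilates the $\bar{c}^t$- and $\bar{Q}_u^t$-weighted corrections in \eqref{eq:vxupdate}}, so that the three recursions coincide and share the boundary value $0$. The only other delicate point is invoking \ref{asm:Atfullrank}--\ref{asm:pdhessian} to guarantee $\delta_w=\delta_c=0$ and non-singularity of $K_t$, which is what upgrades ``$\bar{Q}_u^t=0$'' to ``$\alpha_t=\psi_t=0$'' in the KKT $\Rightarrow$ fixed-point direction.
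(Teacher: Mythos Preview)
Your proposal is correct and follows essentially the same route as the paper's own proof: both directions hinge on the backward induction showing $\bar V_x^{t}=\bar\lambda_t$ whenever $\bar c^t=0$ and $\bar Q_u^t=0$, together with the equivalence (via non-singularity of $K_t$) between $(\alpha_t,\psi_t)=0$ and $\bar Q_u^t=0$. Your only additions are cosmetic: you make the induction structure explicit and, in the KKT $\Rightarrow$ fixed-point direction, you first identify the given KKT multiplier $\bm{\lambda}$ with the backward-pass costate $\bar{\bm{\lambda}}$ via the shared recursion \eqref{eq:kktx}, whereas the paper works directly with $\bar{\bm{\lambda}}$.
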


\begin{proof}
    Suppose $\bar{\mathbf{w}}$ is a fixed point of FilterDDP. Then by non-singularity of $K_t$, $\bar{Q}_u^t = 0$, $\bar{c}^t = 0$. By \eqref{eq:vxupdate} and the definition of $\bar{Q}_u^t$, $\bar{V}_x^t = \bar{\lambda}_t$ for all $t\in[N]$. It follows that $\nabla_{u_t} \mathcal{L}(\bar{\mathbf{w}}, \bar{\bm{\lambda}}) = \bar{L}_u^t + \bar{\lambda}_{t+1} \bar{f}_u^t = \bar{Q}_u^t = 0$. Substituting the update rule for $\bar{\lambda}_t$ into \eqref{eq:kktx} yields $\nabla_{x_t} \mathcal{L}(\bar{\mathbf{w}}, \bar{\bm{\lambda}}) = 0$. The remaining conditions in \eqref{eq:kkt} are satisfied by assumption.

    Now suppose that $\bar{\mathbf{w}}$ satisfies \eqref{eq:kkt}. Then $\nabla_{u_t} \mathcal{L}(\bar{\mathbf{w}}, \bar{\bm{\lambda}}) = \bar{L}_u^t + \bar{\lambda}_{t+1}^\top \bar{f}_u^t = 0$ and $\bar{c}^t = 0$. Since $\bar{c}^N = 0$ and $\bar{L}_u^N = 0$, by \eqref{eq:vxupdate} it follows that $\bar{V}_x^N = \bar{\lambda}_N$. Since $\bar{c}^{N-1} = 0$ and $\bar{L}_u^{N-1} + \bar{\lambda}_{N}^\top \bar{f}_u^{N-1} = 0$, by \eqref{eq:vxupdate} it follows that $\bar{V}_x^{N-1} = \bar{\lambda}_{N-1}$ and consequently $\bar{Q}_u^{N-1} = 0$. An inductive argument yields $\bar{Q}_u^t = 0$ for all $t\in[N]$, which with $\bar{c}^t = 0$ and $K_t$ non-singular implies that $\bar{\mathbf{w}}$ is a fixed point of FilterDDP.
\end{proof}

We now establish local quadratic convergence. Denote an optimal point by $\mathbf{w}^\star$ and let $R$ abbreviate $O(\|\mathbf{w}^\star - \bar{\mathbf{w}}\|^2)$. 

\begin{proposition}\label{prop:quadtaylor}
    Let $\lambda^{N+1}(\mathbf{w}) = 0_{n_x}$ and for $t\in[N]$,
    \begin{equation}\label{eq:lambdaw}
        \lambda^t(\mathbf{w})^\top \coloneqq \nabla_x L(x_t, u_t, \phi_t) +  \lambda^{t+1}(\mathbf{w})^\top \nabla_x f(x_t, u_t).
    \end{equation}
    Note that $ \lambda^t(\bar{\mathbf{w}}) = (\bar{\lambda}_t)^\top$. Under Assumptions 1,
    \begin{equation}\label{eq:valuelambda}
        \lambda^t(\mathbf{w}^\star) = (\bar{V}_x^t)^\top + P_t (x_t^\star - \bar{x}_t) + R.
    \end{equation}
\end{proposition}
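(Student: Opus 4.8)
The plan is to prove \eqref{eq:valuelambda} by backward induction on $t$, from $t=N+1$ down to $t=1$, in parallel with an auxiliary claim that $\bar{V}_x^t = (\lambda^t(\bar{\mathbf{w}}))^\top = (\bar{\lambda}_t)^\top$ holds only at a fixed point, so more precisely I would carry a joint inductive hypothesis that simultaneously controls the error in $\bar V_x^t$ and relates $\hat V_{xx}^t$ to the exact second-order sensitivity. The base case $t=N+1$ is immediate from the boundary conditions \eqref{eq:vxboundary} and $\lambda^{N+1}(\mathbf{w}^\star)=0$. For the inductive step, I would Taylor-expand $\lambda^t(\mathbf{w}^\star)$ about $\bar{\mathbf{w}}$ using its defining recursion \eqref{eq:lambdaw}: write $\lambda^t(\mathbf{w}^\star)^\top = \nabla_x L(x_t^\star,u_t^\star,\phi_t^\star) + \lambda^{t+1}(\mathbf{w}^\star)^\top \nabla_x f(x_t^\star,u_t^\star)$, expand each factor to first order around $\bar w_t$ (second-order remainders are $R$ by Assumption \ref{asm:lipschitz}), and substitute the inductive hypothesis \eqref{eq:valuelambda} at $t+1$, namely $\lambda^{t+1}(\mathbf{w}^\star) = (\bar V_x^{t+1})^\top + \hat V_{xx}^{t+1}(x_{t+1}^\star - \bar x_{t+1}) + R$.

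The key algebraic bridge is that $x_{t+1}^\star - \bar x_{t+1} = f(x_t^\star,u_t^\star) - f(\bar x_t,\bar u_t) = \bar f_x^t (x_t^\star - \bar x_t) + \bar f_u^t(u_t^\star - \bar u_t) + R$, and that the perturbed Newton step \eqref{eq:bwkktfull} together with the forward pass \eqref{eq:forwardpass} pins down $u_t^\star - \bar u_t$ and $\phi_t^\star - \bar\phi_t$ in terms of $x_t^\star - \bar x_t$ up to $R$: since $\mathbf{w}^\star$ is a KKT point, $\bar Q_u^t + \bar Q_{ux}^t(x_t^\star-\bar x_t) + \bar Q_{uu}^t(u_t^\star - \bar u_t) + A_t(\phi_t^\star-\bar\phi_t) = R$ and $\bar c^t + \bar c_x^t(x_t^\star-\bar x_t) + A_t^\top(u_t^\star-\bar u_t) = R$, while $\alpha_t,\beta_t,\psi_t,\omega_t$ solve the same system with $H_t,B_t$ (the $\hat V_{xx}$-based surrogates) and the $\delta_w,\delta_c$ perturbations in place of the exact $\bar Q$-blocks. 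Here I would invoke that under the inductive hypothesis the discrepancy between $H_t$ and $\bar Q_{uu}^t$ (resp. $B_t$ and $\bar Q_{ux}^t$) is itself $O(\|\mathbf{w}^\star-\bar{\mathbf{w}}\|)$ — because $\hat V_{xx}^{t+1}$ agrees with the exact Hessian to that order and $\bar\lambda_{t+1}$ agrees with $\bar V_x^{t+1}$ to that order — and crucially that $\delta_w,\delta_c$ are driven to zero by Assumption \ref{asm:pdhessian} (no inertia correction near the solution), so $K_t^{-1}$ is bounded and $u_t^\star - \bar u_t = (\alpha_t + \beta_t(x_t^\star-\bar x_t)) + O(\|x_t^\star-\bar x_t\|\cdot\|\mathbf{w}^\star-\bar{\mathbf{w}}\|) = \beta_t(x_t^\star-\bar x_t) + (\text{terms absorbed into }R)$, and similarly $\phi_t^\star - \bar\phi_t = \omega_t(x_t^\star-\bar x_t) + R$. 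Substituting all of this back and collecting the terms linear in $(x_t^\star - \bar x_t)$, the zeroth-order part reproduces $(\bar V_x^t)^\top$ via the $\bar\lambda_t$ update in \eqref{eq:vxupdate} (after reconciling $\bar L_x^t + \bar\lambda_{t+1}\bar f_x^t$ with $\bar Q_x^t + \bar Q_u^t\beta_t + (\bar c^t)^\top\omega_t$ up to $R$), and the first-order coefficient is exactly $C_t + \beta_t^\top H_t\beta_t + B_t^\top\beta_t + \beta_t^\top B_t = \hat V_{xx}^t$ by \eqref{eq:vxxupdate}, again modulo $O(\|\mathbf{w}^\star-\bar{\mathbf{w}}\|)$ corrections that multiply $(x_t^\star-\bar x_t)$ and hence land in $R$.

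The main obstacle I anticipate is bookkeeping the two distinct "small" quantities cleanly: the distance $\|\mathbf{w}^\star - \bar{\mathbf{w}}\|$ (which governs $R$) and the per-stage discrepancies between the surrogate blocks $H_t,B_t,\bar\lambda_{t+1}$ and their exact counterparts $\bar Q_{uu}^t,\bar Q_{ux}^t,\bar V_x^{t+1}$. The argument only closes if these discrepancies are themselves $O(\|\mathbf{w}^\star-\bar{\mathbf{w}}\|)$, which must be threaded through the induction rather than assumed; this is where the perturbed Newton step matters, since without the inertia guarantee from Assumption \ref{asm:pdhessian} the factor $K_t^{-1}$ could blow up and destroy the $R$ accounting. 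I would also need uniform boundedness of $\beta_t,\omega_t,K_t^{-1}$ over the neighbourhoods $\mathcal{N}_t$ — available from Assumptions \ref{asm:lipschitz}–\ref{asm:pdhessian} and compactness — so that products of a bounded quantity with an $O(\|\mathbf{w}^\star-\bar{\mathbf{w}}\|^2)$ term remain $R$, and products of $(x_t^\star-\bar x_t)$ with an $O(\|\mathbf{w}^\star-\bar{\mathbf{w}}\|)$ term are absorbed into $R$ as well. Everything else is a (lengthy but routine) chain-rule-and-collect computation.
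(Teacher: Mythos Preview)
Your overall structure---backward induction, Taylor-expand $\lambda^t(\mathbf{w}^\star)$ via its recursion, feed in the inductive hypothesis at $t+1$ and the dynamics expansion \eqref{eq:dyntaylor}---matches the paper. But there is a genuine gap in the step where you eliminate $u_t^\star-\bar u_t$ and $\phi_t^\star-\bar\phi_t$.

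You write $u_t^\star-\bar u_t = \alpha_t + \beta_t(x_t^\star-\bar x_t) + O(\cdot) = \beta_t(x_t^\star-\bar x_t) + (\text{absorbed into }R)$, i.e.\ you drop $\alpha_t$ into $R$. This is false: $\alpha_t = -K_t^{-1}\bigl[(\bar Q_u^t)^\top;\bar c^t\bigr]$ is only $O(\|\mathbf{w}^\star-\bar{\mathbf{w}}\|)$, not $O(\|\mathbf{w}^\star-\bar{\mathbf{w}}\|^2)$, because $\bar Q_u^t$ and $\bar c^t$ vanish at $\mathbf{w}^\star$ but not at $\bar{\mathbf{w}}$. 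The same error recurs when you say the zeroth-order part $\bar L_x^t + \bar\lambda_{t+1}\bar f_x^t = (\bar Q_x^t)^\top$ can be ``reconciled'' with $(\bar V_x^t)^\top = (\bar Q_x^t)^\top + \beta_t^\top(\bar Q_u^t)^\top + \omega_t^\top\bar c^t$ up to $R$: the discrepancy $\beta_t^\top(\bar Q_u^t)^\top + \omega_t^\top\bar c^t$ is again first order, not second order.

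The paper does not solve for $u_t^\star-\bar u_t$ at all. It keeps $(u_t^\star-\bar u_t)$ and $(\phi_t^\star-\bar\phi_t)$ explicit in the expansion of $\lambda^t(\mathbf{w}^\star)$, derives a second linear relation for them by Taylor-expanding the KKT stationarity $\nabla_{u_t}\mathcal{L}(\mathbf{w}^\star,\bm{\lambda}(\mathbf{w}^\star))=0$ (this yields $0 = (\bar Q_u^t)^\top + H_t(u_t^\star-\bar u_t) + B_t(x_t^\star-\bar x_t) + A_t(\phi_t^\star-\bar\phi_t) + R$), and then eliminates them \emph{algebraically} using the backward-pass relations $H_t\beta_t + A_t\omega_t = -B_t$ and $A_t^\top\beta_t = -\bar c_x^t$ together with the constraint expansion \eqref{eq:kktpritaylor}. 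The first-order terms do not get thrown into $R$; they cancel \emph{exactly} against the $\beta_t^\top(\bar Q_u^t)^\top + \omega_t^\top\bar c^t$ piece of $(\bar V_x^t)^\top$. If you prefer your substitution route, it can be repaired: keep $\alpha_t,\psi_t$, substitute $u_t^\star-\bar u_t = \alpha_t + \beta_t(x_t^\star-\bar x_t) + R$ into the $\lambda^t$ expansion, and then verify the identity $B_t^\top\alpha_t + (\bar c_x^t)^\top\psi_t = \beta_t^\top(\bar Q_u^t)^\top + \omega_t^\top\bar c^t$ directly from the symmetry of $K_t$ in \eqref{eq:bwkktfull}. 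That is the missing piece.

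A secondary point: your plan to carry an auxiliary inductive hypothesis bounding $\|H_t-\bar Q_{uu}^t\|$ and $\|\bar\lambda_{t+1}-\bar V_x^{t+1}\|$ by $O(\|\mathbf{w}^\star-\bar{\mathbf{w}}\|)$ is unnecessary. Because the derivatives of $\lambda^t$ in \eqref{eq:lambdaw} already involve $\bar\lambda^{t+1}$ (not $\bar V_x^{t+1}$), and the inductive hypothesis \eqref{eq:valuelambda} already supplies $\hat V_{xx}^{t+1}$ (not $\bar V_{xx}^{t+1}$), the Taylor expansion lands \emph{directly} on $H_t,B_t,C_t$ with no comparison to $\bar Q_{uu}^t,\bar Q_{ux}^t,\bar Q_{xx}^t$ needed. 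This is precisely why the paper defines the perturbed backward pass that way.
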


\begin{proof}
    We proceed with an inductive argument. The base case $t=N+1$ holds since $\lambda^{N+1} = 0$, $\bar{V}_x^{N+1} = 0$ and $P_{N+1} = 0$. Suppose \eqref{eq:valuelambda} holds for some step $t+1$. By Ass. \ref{asm:lipschitz}, we can apply Taylor's theorem to $f$ and $c$, yielding
    \begin{equation}\label{eq:dyntaylor}
        \begin{aligned}
            x_{t+1}^\star &= \bar{x}_{t+1} + \bar{f}_x^t (x_t^\star - \bar{x}_t) + \bar{f}_u^t(u_t^\star - \bar{u}_t) + R,
        \end{aligned}
    \end{equation}
    \begin{equation}\label{eq:kktpritaylor}
        0=c^t(x_t^\star, u_t^\star) = \bar{c}^t + A_t^\top (u_t^\star - \bar{u}_t) + \bar{c}_x^t (x_t^\star - \bar{x}_t) + R.
    \end{equation}
    In addition, by Ass. \ref{asm:lipschitz}, the derivatives of $\lambda^t$ exist (locally) and are Lipschitz, given by $\bar{\lambda}_{w_{t+1:N}}^t = (\bar{f}_{x}^t)^\top \bar{\lambda}_{w_{t+1:N}}^{t+1}$ and
    \begin{equation}\label{eq:lambdaderivt}
        \bar{\lambda}_{y_t}^t = \bar{L}_{xy}^t + \bar{\lambda}^{t+1} \cdot \bar{f}_{xy}^t, \,\,\,\, y\in\{x, u\}, \,\,\,\, \bar{\lambda}_\phi^t = (\bar{c}_x^t)^\top,
    \end{equation}
    also noting that derivatives of $\lambda^t$ w.r.t $w_s$ for $s < t$ are zero.

   Applying Taylor's theorem to \eqref{eq:lambdaw} for step $t$ yields   
    \begin{equation}\label{eq:lambdataylor}
    \setlength\arraycolsep{0pt}
    \begin{array}{ccl}
        \lambda^t(\mathbf{w}^\star) 
        & = & \bar{\lambda}^{t} + \bar{\lambda}^t_{\mathbf{w}} (\mathbf{w}^\star - \bar{\mathbf{w}}) + R \\
        & \overset{\eqref{eq:lambdaw}}{=} & \bar{\lambda}^t + \bar{\lambda}_{w_t}^t (w_t^\star - \bar{w}_t)  \\
        & & \quad + (\bar{f}_x^t)^\top \bar{\lambda}_{\mathbf{w}}^{t+1}(\mathbf{w}^\star -  \bar{\mathbf{w}}) + R\\
        & \overset{\eqref{eq:lambdaw}}{=} & (\bar{L}^t_x)^\top + \bar{\lambda}_{w_t}^t (w_t^\star - \bar{w}_t) \\
        & & \quad + (\bar{f}_x^t)^\top \lambda^{t+1}(\mathbf{w}^\star) + R \\
        & \overset{\eqref{eq:valuelambda}, \eqref{eq:dyntaylor}}{=} & (\bar{Q}^t_x)^\top + \bar{\lambda}_{w_t}^t (w_t^\star - \bar{w}_t) + (\bar{f}_x^t)^\top P_{t+1} (  \\
        & & \quad \bar{f}_x^t (x_{t}^\star - \bar{x}_{t}) + \bar{f}_u^t (u_{t}^\star - \bar{u}_{t})) + R \\
        & \overset{\eqref{eq:lambdaderivt}, \eqref{eq:bwhessperturbed}}{=} & (\bar{Q}_x^t)^\top + C_t(x_t^\star - \bar{x}_t) + B_t^\top (u_t^\star - \bar{u}_t) \\
        & & \quad + (\bar{c}_x^t)^\top (\phi_t^\star - \bar{\phi}_t) + R.\\

    \end{array}
    \end{equation}
    By Theorem \ref{thm:stationary}, $\nabla_{u_t} \mathcal{L}(\mathbf{w}^\star, \bm{\lambda}(\mathbf{w}^\star)) = 0$. Applying Taylor's theorem to this expression around $\bar{\mathbf{w}}$ yields,
    \begin{equation}\label{eq:ghattaylor}
        \setlength\arraycolsep{0pt}
        \begin{array}{ccl}
        0 & = & \bar{\mathcal{L}}_{u_t}^t + \bar{\mathcal{L}}_{u_t w_t}^t (w_t^\star - \bar{w}_t) \\
        & & \quad + \bar{\mathcal{L}}_{u_t w_{t+1:N}} (w_{t+1:N}^\star - \bar{w}_{t+1:N}) + R \\
        & \overset{\eqref{eq:kktu}}{=} & (\bar{L}_u^t)^\top + (\bar{f}_u^t)^\top \bar{\lambda}_{t+1} + \bar{\mathcal{L}}_{u_t w_t}^t (w_t^\star - \bar{w}_t) \\
        & & + (\bar{f}_u^t)^\top \bigl( \bar{\lambda}^{t+1}_{w_{t+1:N}}(w_{t+1:N}^\star - \bar{w}_{t+1:N})\bigr) + R\\
        & \overset{\eqref{eq:valuelambda}, \eqref{eq:dyntaylor}}{=} & (\bar{Q}_u^t)^\top + \bar{\mathcal{L}}_{u_t w_t}^t (w_t^\star - \bar{w}_t) + (\bar{f}_u^t)^\top P_{t+1} (  \\
        & & \quad \bar{f}_x^t (x_{t}^\star - \bar{x}_{t}) + \bar{f}_u^t (u_{t}^\star - \bar{u}_{t})) + R\\

        & \overset{\eqref{eq:kktu}}{=} & R + (\bar{Q}_u^t)^\top + A_t(\phi_t^\star - \bar{\phi}_t) \\
        & & + (\bar{L}_{uu}^t + (\bar{f}_u^t)^\top P_{t+1} \bar{f}_u^t + \bar{\lambda}_{t+1} \cdot \bar{f}_{uu}^t) (u_t^\star - \bar{u}_t)   \\\
        & & + (\bar{L}_{ux}^t + (\bar{f}_u^t)^\top P_{t+1} \bar{f}_x^t + \bar{\lambda}_{t+1} \cdot \bar{f}_{ux}^t)(x_t^\star - \bar{x}_t) \\
        
        & \overset{\eqref{eq:bwhessperturbed}}{=} & (\bar{Q}_u^t)^\top + H_t(u_t^\star - \bar{u}_t) + B_t (x_t^\star - \bar{x}_t) 
        \\
        & & \quad + A_t(\phi_t^\star - \bar{\phi}_t) + R.
        \end{array}
    \end{equation}

    Rearranging \eqref{eq:ghattaylor} for $(\bar{Q}_u^t)^\top$, substituting the resulting expression into \eqref{eq:vxupdate} and substituting the resulting expression for $(\bar{Q}_x^t)^\top$ into \eqref{eq:lambdataylor} yields
    \begin{equation}
        \setlength\arraycolsep{0pt}
        \begin{array}{ccl}
        \lambda^t(\mathbf{w}^\star) & = & (\bar{V}_x^t)^\top + (C_t + \beta_t^\top B_t)(x_t^\star - \bar{x}_t) - \omega_t^\top \bar{c}^t \\
        & & \quad + (\beta^\top H_t + B_t^\top)(u_t^\star - \bar{u}_t) \\
        & & \quad + (A_t^\top \beta_t + \bar{c}_x^t)^\top (\phi_t^\star - \bar{\phi}) + R \\
        & \overset{\eqref{eq:bwkktfull}}{=} & (\bar{V}_x^t)^\top -\omega_t^\top A_t^\top (u_t^\star - \bar{u}_t) \\
        & & \quad + (C_t + \beta_t^\top B_t) (x_t^\star - \bar{x}_t) -  \omega_t^\top \bar{c}^t + R\\ 
        & \overset{\eqref{eq:kktpritaylor}}{=} & (\bar{V}_x^t)^\top + (C_t + \beta_t^\top B_t + \omega_t^\top \bar{c}_x^t)(x_t^\star - \bar{x}_t) + R\\
        & \overset{\eqref{eq:bwkktfull}}{=} & (\bar{V}_x^t)^\top + (C_t + \beta_t^\top B_t \\
        & & \quad - \omega_t^\top A_t^\top \beta_t)(x_t^\star - \bar{x}_t) + R\\
        & \overset{\eqref{eq:bwkktfull}, \eqref{eq:vxxupdate}}{=} & (\bar{V}_x^t)^\top + P_t (x_t^\star - \bar{x}_t) + R,
        \end{array}
    \end{equation}
    as required.
\end{proof}

\begin{remark}
    Stacking \eqref{eq:ghattaylor} and \eqref{eq:kktpritaylor} yields the identity
    \begin{equation}\label{eq:kktrhsexpansion}
            \begin{bmatrix}
            (\bar{Q}_u^t)^\top \\
            \bar{c}^t
            \end{bmatrix} = - K_t 
            \begin{bmatrix}
                u_t^\star - \bar{u}_t \\ \phi_t^\star - \bar{\phi}_t
            \end{bmatrix}
            - \begin{bmatrix}
                B_t \\ 
                \bar{c}_x^t
            \end{bmatrix} (x_t^\star - \bar{x}_t) + R.
    \end{equation}
\end{remark}
\medskip

Finally, we present our main result, which shows that when the current iterate is sufficiently close to an optimal point $\mathbf{w}^\star$, FilterDDP converges at a quadratic rate. Let $\mathbf{w}^+$ be shorthand for $\mathbf{w}^+(1)$, i.e., a full forward pass step where $\gamma=1$. 
\begin{theorem}
    Under Assumptions 1, there exists positive scalars $\epsilon$ and $M$ such that if $\|\mathbf{w}^\star - \bar{\mathbf{w}}\| < \epsilon$, 
    \begin{equation}\label{eq:quadraticrate}
        \|\mathbf{w}^\star - \mathbf{w}^+\| \leq M \|\mathbf{w}^\star - \bar{\mathbf{w}} \|^2,
    \end{equation}
    and furthermore,
    \begin{equation}\label{eq:contract}
        \|\mathbf{w}^\star - \mathbf{w}^+\| < \|\mathbf{w}^\star - \bar{\mathbf{w}} \|.
    \end{equation}
\end{theorem}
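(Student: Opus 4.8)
I would prove the bound by tracking the closed-loop forward-pass error stage by stage.

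The plan is to control the forward-pass error $e_t \coloneqq (x_t^\star - x_t^+,\, u_t^\star - u_t^+,\, \phi_t^\star - \phi_t^+)$ one stage at a time by induction on $t$, showing every component of $e_t$ is $R = O(\|\mathbf{w}^\star - \bar{\mathbf{w}}\|^2)$. Since the horizon $N$ is finite and all $O(\cdot)$ constants are uniform over the neighbourhoods $\mathcal{N}_t$ by Assumptions 1, stacking the stage-wise bounds gives a single constant $M$ with $\|\mathbf{w}^\star - \mathbf{w}^+\| \le M \|\mathbf{w}^\star - \bar{\mathbf{w}}\|^2$, which is exactly \eqref{eq:quadraticrate}; then \eqref{eq:contract} follows by additionally taking $\epsilon < 1/M$, so that $M\|\mathbf{w}^\star - \bar{\mathbf{w}}\|^2 < \|\mathbf{w}^\star - \bar{\mathbf{w}}\|$ whenever $0 < \|\mathbf{w}^\star - \bar{\mathbf{w}}\| < \epsilon$ (the degenerate case $\bar{\mathbf{w}} = \mathbf{w}^\star$ is a fixed point by Theorem \ref{thm:stationary}, whence $\mathbf{w}^+ = \mathbf{w}^\star$ and both inequalities hold trivially).

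For the induction, the base case is $x_1^\star - x_1^+ = 0$, since $x_1^+ = \hat{x}_1 = x_1^\star$. Suppose $x_t^\star - x_t^+ = R$. Eliminating the backward-pass parameters $\alpha_t, \psi_t, \beta_t, \omega_t$ from the $\gamma = 1$ forward-pass rule \eqref{eq:forwardpass} via the backward-pass solve \eqref{eq:bwkktfull}, the stacked increment $(u_t^+ - \bar{u}_t,\, \phi_t^+ - \bar{\phi}_t)$ equals $-K_t^{-1}$ times the vector obtained by stacking $(\bar{Q}_u^t)^\top + B_t(x_t^+ - \bar{x}_t)$ and $\bar{c}^t + \bar{c}_x^t(x_t^+ - \bar{x}_t)$. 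Substituting the second-order Taylor identity \eqref{eq:kktrhsexpansion} for the stacked pair $\big((\bar{Q}_u^t)^\top,\, \bar{c}^t\big)$, the $K_t$ factors telescope; using that $K_t^{-1}$ is uniformly bounded (Assumptions \ref{asm:Atfullrank} and \ref{asm:pdhessian} guarantee no inertia correction and a nonsingular $K_t$) one is left with $(u_t^\star - u_t^+,\, \phi_t^\star - \phi_t^+)$ equal to $-K_t^{-1}$ times the stacking of $B_t(x_t^\star - x_t^+)$ and $\bar{c}_x^t(x_t^\star - x_t^+)$, plus $R$. By the inductive hypothesis this is $R$, so $u_t^\star - u_t^+ = R$ and $\phi_t^\star - \phi_t^+ = R$. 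Finally, $x_{t+1}^\star = f(x_t^\star, u_t^\star)$ by feasibility of $\mathbf{w}^\star$ and $x_{t+1}^+ = f(x_t^+, u_t^+)$ by \eqref{eq:forwardpass}, so Lipschitz continuity of $f$ (Assumption \ref{asm:lipschitz}) gives $\|x_{t+1}^\star - x_{t+1}^+\| \le L(\|x_t^\star - x_t^+\| + \|u_t^\star - u_t^+\|) = R$, closing the induction.

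The main obstacle is that the forward pass is a nonlinear, closed-loop rollout: $f$ is re-evaluated at the perturbed states $x_t^+$ rather than along the Newton-predicted trajectory, so $\mathbf{w}^+$ is not literally a single Newton step on the KKT system \eqref{eq:kkt} and a textbook Newton-convergence argument cannot be invoked directly. The resolution is precisely the forward induction above, which couples the exact linear solve performed in the backward pass \eqref{eq:bwkktfull} with the quadratic-remainder expansion \eqref{eq:kktrhsexpansion} derived from Proposition \ref{prop:quadtaylor}; the feedback gains $\beta_t, \omega_t$ are exactly what cause the residual cross term (proportional to $x_t^\star - x_t^+$) to appear, which is then absorbed into $R$ through the inductive bound on the state error. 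Two minor technical points remain: (i) the $O(\cdot)$ constants must be uniform across the finitely many stages, which holds by the uniform Lipschitz and singular-value bounds in Assumptions 1, so accumulating $R$ over $t = 1, \dots, N$ only inflates the constant by a factor depending on $N$; and (ii) one must verify the trial iterate $w_t^+$ stays inside $\mathcal{N}_t$ so that Taylor's theorem and the Lipschitz estimates are valid — the derived bounds yield $\|w_t^+ - w_t^\star\| = O(\epsilon^2)$, so this is secured by shrinking $\epsilon$ via a standard bootstrap/continuity argument.
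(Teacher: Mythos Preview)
Your proposal is correct and follows essentially the same route as the paper: you combine the backward-pass linear solve \eqref{eq:bwkktfull} with the quadratic-remainder identity \eqref{eq:kktrhsexpansion} (a consequence of Proposition~\ref{prop:quadtaylor}) to bound $(u_t^\star - u_t^+,\phi_t^\star-\phi_t^+)$ in terms of $x_t^\star - x_t^+$, and then close a forward induction on the state error, finishing with $\epsilon < 1/M$ for \eqref{eq:contract}. The only cosmetic difference is that for propagating the state error you invoke the Lipschitz bound on $f$ directly, whereas the paper Taylor-expands $f$ at $(\bar x_t,\bar u_t)$ for both $x_{t+1}^\star$ and $x_{t+1}^+$ and subtracts; both yield $x_{t+1}^\star - x_{t+1}^+ = R$ and neither choice changes the argument.
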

\begin{proof}
    Assumptions \ref{asm:pdhessian} and \ref{asm:Atfullrank} ensures $K_t$ in \eqref{eq:bwkktfull} is non-singular for all $t\in[N]$. The forward pass yields
    \begin{equation}\label{eq:forwardlinear}
    \setlength\arraycolsep{0pt}
    \begin{array}{ccl}
        \begin{bmatrix}
            u_t^+ - \bar{u}_t \\
            \phi_t^+ - \bar{\phi}_t
        \end{bmatrix}
        & \overset{\eqref{eq:forwardpass}}{=} & \begin{bmatrix}
            \alpha_t \\
            \psi_t
        \end{bmatrix} + 
        \begin{bmatrix}
            \beta_t \\ \omega_t
        \end{bmatrix} (x_t^+ - \bar{x}_t) \\
        & \overset{\eqref{eq:bwkktfull}}{=} & - (K_t)^{-1} \begin{bmatrix}
            (\bar{Q}_u^t)^\top \\
            \bar{c}^t
        \end{bmatrix} - (K_t)^{-1}\begin{bmatrix}
            B_t \\
            \bar{c}_x^t 
        \end{bmatrix} (x_t^+ - \bar{x}_t) \\
        & \overset{\eqref{eq:kktrhsexpansion}}{=} & \begin{bmatrix}
            u_t^\star - \bar{u}_t \\ \phi_t^\star - \bar{\phi}_t
        \end{bmatrix} - (K_t)^{-1}  \begin{bmatrix}
            B_t \\ 
            \bar{c}_x^t
        \end{bmatrix} (x_t^+ - x_t^\star) + R,
    \end{array}
    \end{equation}
    noting that the $\bar{x}_t$ terms cancel in simplifying line 3 of \eqref{eq:forwardlinear}.
    
    Since $(K_t)^{-1}$, $B_t$ and $\bar{c}_x^t$ are uniformly bounded by Assumptions 1, it follows after rearranging \eqref{eq:forwardlinear} that
    \begin{equation}\label{eq:uphiplus}
        \begin{gathered}
            u_t^+ - u_t^\star = O\left(\max (\|x_t^+ - x_t^\star\|, \|\mathbf{w}^\star - \bar{\mathbf{w}}\|^2)\right) \\
            \phi_t^+ - \phi_t^\star = O\left(\max (\|x_t^+ - x_t^\star\|, \|\mathbf{w}^\star - \bar{\mathbf{w}}\|^2)\right).
        \end{gathered}
    \end{equation}

    It remains to show that $x_t^+ - x_t^\star = R $ for all $t\in[N]$, which we do using an inductive argument. The base case of $t=1$ holds since $x_1^\star = x_1^+ = \hat{x}_1$. Now suppose $x_t^+ - x_t^\star = R$ for some $t\in[N-1]$. Then by \eqref{eq:uphiplus}, it follows that $w_t^+ - w_t^\star = R$, thus $w_t^+ - \bar{w}_t = w_t^\star - \bar{w}_t + R$ and so $w_t^+ - \bar{w}_t = O(\|\mathbf{w}^\star - \bar{\mathbf{w}}\|)$. 
    
    In addition to \eqref{eq:dyntaylor}, Taylor's theorem applied to $f$ yields
    \begin{equation}\label{eq:dyntaylorplus}
        x_{t+1}^+ = \bar{x}_{t+1} + \bar{f}_x^t (x_t^+ - \bar{x}_t) + \bar{f}_u^t(u_t^+ - \bar{u}_t) + R,
    \end{equation}
     from which we subtract \eqref{eq:dyntaylor} and use $w_t^+ - w_t^\star = R$ to yield $x_{t+1}^+ - x_{t+1}^\star = R$. Additionally, it follows from \eqref{eq:uphiplus} that $u_{t+1}^+ - u_{t+1}^\star = R$ and $\phi_{t+1}^+ - \phi_{t+1}^\star = R$, and so $w_{t+1}^+ - w_{t+1}^\star = R$. We conclude that $\mathbf{w}^+ - \mathbf{w}^\star = R$, and by setting $M$ as the constant in $R$, we have shown that \eqref{eq:quadraticrate} holds.

     Finally, to show \eqref{eq:contract}, we select $\epsilon < 1 / M$ and substitute $\|\mathbf{w}^\star - \bar{\mathbf{w}}\| < \epsilon$ into \eqref{eq:quadraticrate} for the desired result.
\end{proof}

\begin{remark}
    Replacing $\lambda^t$ with $V^t$ invalidates Prop. \ref{prop:quadtaylor}, since \eqref{eq:lambdaw} evaluated at $\bar{\mathbf{w}}$ is in general, inconsistent with \eqref{eq:vxupdate}.
\end{remark}

\section{Extension for Inequality Constraints}

In this section, we present an extension of Alg. 1 for solving inequality constrained problems of form
\begin{equation}\label{eq:oc_ineq}
\begin{array}{rl}
    \underset{\mathbf{x}, \mathbf{u}}{\text{minimise}}  & J(x_{1:N}, u_{1:N}) \coloneqq \sum_{t=1}^{N} \ell(x_t, u_t) \\
    \text{subject to} & x_1 = \hat{x}_1, \\
    & x_{t+1} = f(x_t, u_t) \quad \, \text{for } \,\,\,\, t \in [N-1], \\
    & c(x_t, u_t) = 0, \quad u \geq 0,  \quad \text{for } t \in [N].
\end{array}
\end{equation}
A simple way to extend Alg. 1 is to use a (primal) barrier interior point method as in \cite{prabhu2024differentialdynamicprogrammingstagewise}, which approximately solves a sequence of equality constrained barrier sub-problems, indexed by barrier parameters $\{\mu_j\}$, with the property that $\lim_{j\rightarrow \infty} \mu_j = 0$. Each sub-problem is given by \eqref{eq:oc}, after replacing running cost $\ell$ with the \emph{barrier cost}
\begin{equation}
    \varphi_{\mu_j}(x_t, u_t) \coloneqq \ell(x_t, u_t) - \mu \sum_{i=1}^{m} \ln(u_t^{(i)}).
\end{equation}

Instead, our proposed extension adopts a primal-dual interior point framework, similar to \cite{pavlov_ipddp} and \cite{ipopt}. Concretely, we introduce dual variables for the inequality constraints denoted by $\mathbf{z}\coloneqq z_{1:N}$. The barrier sub-problems described above are then replaced with the \textit{perturbed KKT conditions} given by \eqref{eq:kkt}, $ z_t^\star \odot u_t^\star - \mu_j e = 0$ and $z_t^\star \geq 0$. Primal-dual methods are in general, numerically better conditioned than their primal counterparts e.g., see \cite[Ch. 19]{nwopt} and \cite{ipopt, pavlov_ipddp}.

\subsection{Changes to the Backward and Forward Passes}\label{ssec:changesipm}

The primal-dual extension of FilterDDP requires minor changes to various equations in Sec. \ref{sec:filterddp}, which are described in this section. First, the termination criteria \eqref{eq:opterr} for sub-problem $j$ is replaced by $E_{\mu_j}(\mathbf{w}, \bm{\lambda}) < \kappa_{\epsilon} \mu_j$, where
\begin{equation}\label{eq:terminationipm}
    E_{\mu_j}(\mathbf{w}, \bm{\lambda}) \coloneqq \max\left\{ E(\mathbf{w}, \bm{\lambda}), \max_t \|z_t \odot u_t - \mu_j e\| \right\},
\end{equation}
and $\kappa_\epsilon > 0$ is a parameter. Simply put, the perturbed complementarity slackness conditions are added.

If \eqref{eq:terminationipm} is satisfied for the current iterate, then the sub-problem is considered solved and $\mu_j$ is updated using
\begin{equation}\label{eq:fiaccomccormick}
    \mu_{j+1} = \max \left\{ \frac{\epsilon_\text{tol}}{10}, \min \left\{ \kappa_\mu \mu_j, \mu_j^{\theta_\mu}\right\}\right\},
\end{equation}
where $\kappa_\mu \in (0, 1)$ and $\theta_\mu \in (1, 2)$. This formula was proposed by~\cite{byrdsuper} and used in IPOPT~\cite{ipopt}, and encourages a superlinear decrease of $\mu_j$. The algorithm is deemed to have converged if $E_0(\mathbf{w}_k, \bm{\lambda}_k) < \epsilon_\text{tol}$.

\paragraph{Backward Pass} We replace \eqref{eq:bwkktfull} with
\begin{equation}\label{eq:bwipm}
    \begin{bmatrix}
        H_t + \Sigma_t + \delta_w I & A_t \\
        A_t^\top & -\delta_c I
    \end{bmatrix}
    \begin{bmatrix}
        \alpha_t & \beta_t \\ \psi_t & \omega_t
    \end{bmatrix}
     = -\begin{bmatrix}
         (\hat{Q}_u^t)^\top & B_t \\ \bar{c}^t & \bar{c}_x^t
     \end{bmatrix},
\end{equation}
where $\bar{U}_t \coloneqq \mathrm{diag}(\bar{u}_t)$, $\bar{Z}_t \coloneqq \mathrm{diag}(\bar{z}_t)$,  $\Sigma_t \coloneqq \bar{U}_t^{-1} \bar{Z}_t$ and $\hat{Q}_u^t \coloneqq \bar{Q}_u^t - \mu e^\top \bar{U}_t^{-1}$.

\paragraph{Forward Pass} In addition to \eqref{eq:forwardpass}, we apply the update rule for dual variables $z_t$ given by
\begin{equation}
    z_t^+(x_t^+, \gamma) = \bar{z}_t + \gamma\chi_t + \zeta_t (x^+_t - \bar{x}_t),
\end{equation}
where $\chi_t = \mu \bar{U}_t^{-1} e - \bar{z}_t - \Sigma_t \alpha_t$ and $\zeta_t = -\Sigma_t \beta_t$.
Also, a fraction-to-the-boundary condition \cite{ipopt} given by
\begin{equation}\label{eq:fractoboundary}
    u_t^+(x_t^+, \gamma) \geq (1 - \tau)\bar{u}_t, \quad z_t^+(x_t^+, \gamma) \geq (1 - \tau) \bar{z}_t \quad \forall t,
\end{equation}
where $\tau = \max\{\tau_\mathrm{min}, 1-\mu\}$ for some $\tau_\mathrm{min} > 0$ must be satisfied for a step size $\gamma$ to be accepted as the next iterate.

Finally, for filter condition \eqref{eq:filtercriteria}, we replace $\ell$ with $\varphi_{\mu_j}$ for sub-problem $j$ and replace $\bar{Q}_u^t$ with $\hat{Q}_u^t$ in \eqref{eq:suffdecreasecost}.

\paragraph{Barrier sub-problem update step} The final modification to Alg. 1 is that Step \ref{alg:checkconverge} of Alg. 1 is replaced with:
\begin{enumerate}[label*=\arabic*.]
    \setcounter{enumi}{2}
    \item \textit{Check convergence of overall problem.} \,\, If $E_0(\mathbf{w}_{k}$, $\bm{\lambda}_k) < \epsilon_\mathrm{tol}$ then STOP (converged). If $k = \text{max\_iters}$ then STOP (not converged). \label{alg:checkconvergeipm}
    \begin{enumerate}[label*=\arabic*.]
        \item \textit{Check convergence of barrier problem.} \,\, If $E_{\mu_j}(\mathbf{w}_{k}$, $\bm{\lambda}_k) < \kappa_\epsilon \mu_j$ then:
        \begin{enumerate}[label*=\arabic*.]\label{alg:updatebarrier}
            \item Compute $\mu_{j+1}$ and $\tau_{j+1}$ from \eqref{eq:fiaccomccormick} and \eqref{eq:fractoboundary} and set $j\gets j+1$.
            \item Re-initialise the filter  $\mathcal{F}_k \coloneqq \left\{ (\theta, \mathcal{L}) \in \mathbb{R}^2 : \theta \geq \theta_{\max} \right\}$.
            \item If $k=0$, repeat step \ref{alg:updatebarrier}, otherwise continue at \ref{alg:backtrackingls}
        \end{enumerate}
    \end{enumerate}
    \label{alg:checkconvergebarrier}
\end{enumerate}
Finally, Alg. 1 requires additional parameters $\tau_\mathrm{min} > 0$, $\mu_\mathrm{init} > 0$, $\kappa_\epsilon > 0$, $\kappa_\mu \in (0, 1)$ and $\theta_\mu \in (1, 2)$.

\begin{figure*}[t!]
     \centering
     \begin{subfigure}[b]{0.3\textwidth}
         \centering
         \includegraphics[width=\textwidth]{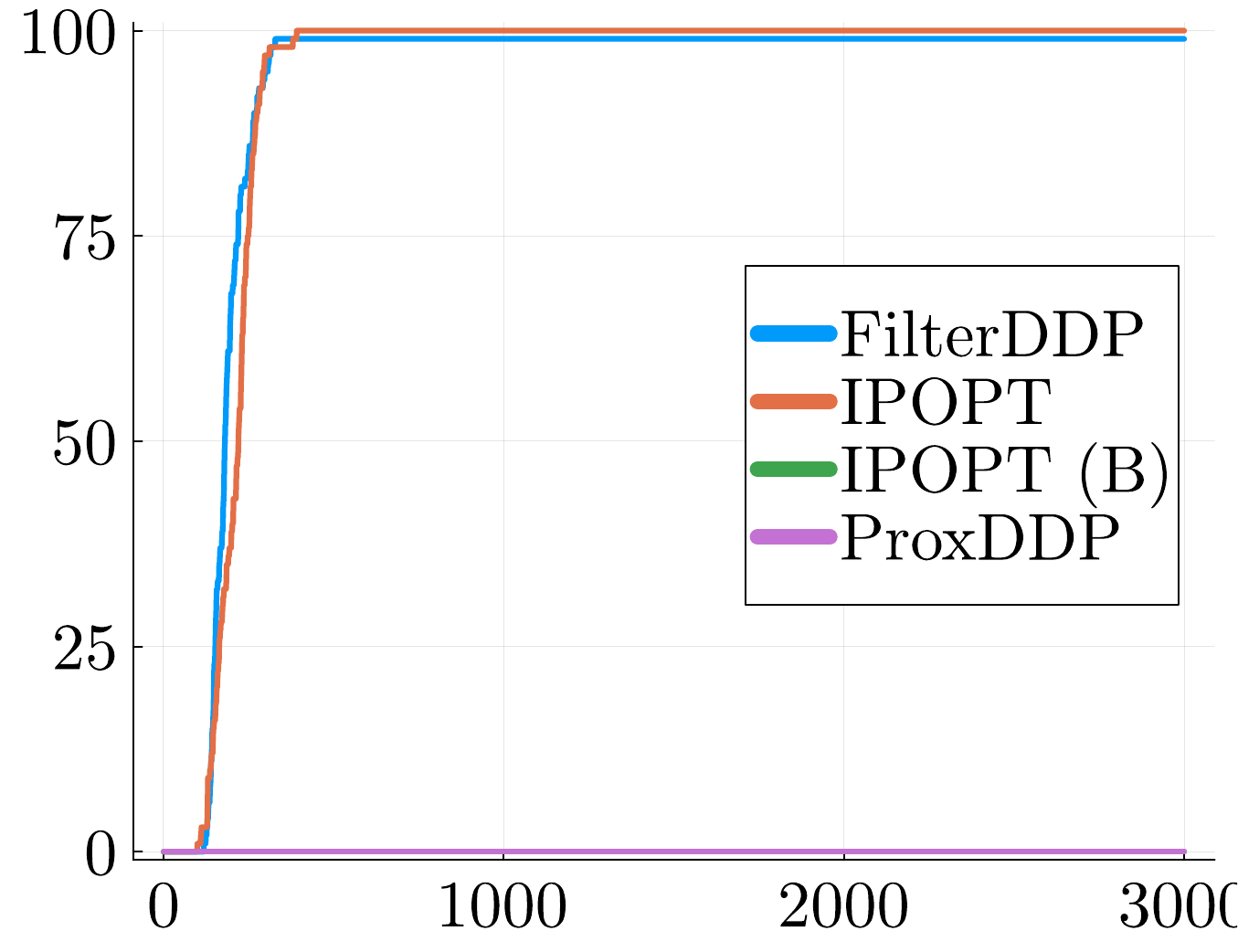}
         \caption{Acrobot Contact}
         \label{fig:results_cartpole}
     \end{subfigure}
     \hfill
     \begin{subfigure}[b]{0.3\textwidth}
         \centering
         \includegraphics[width=\textwidth]{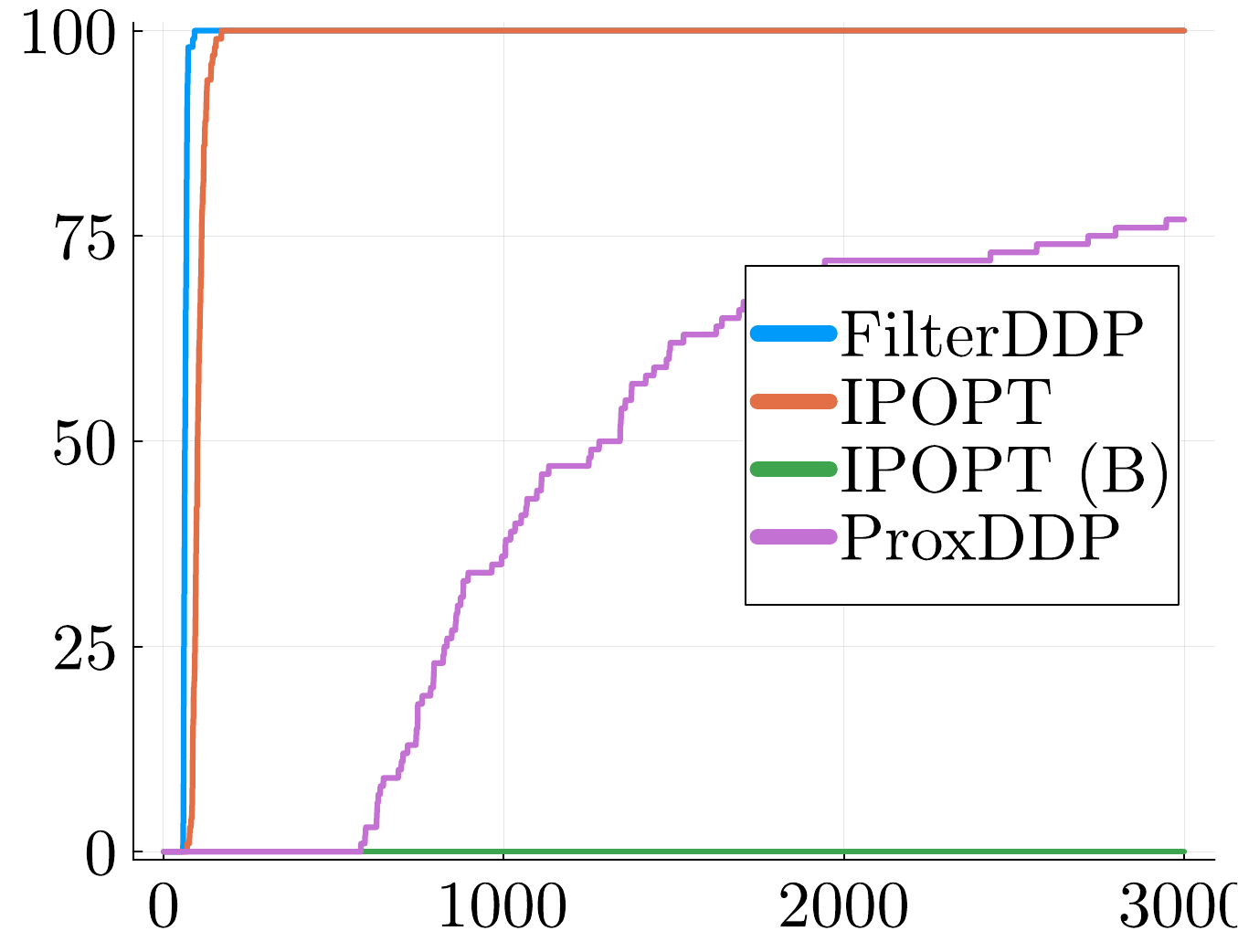}
         \caption{Cartpole Friction}
         \label{fig:results_acrobot}
     \end{subfigure}
     \hfill
     \begin{subfigure}[b]{0.3\textwidth}
         \centering
         \includegraphics[width=\textwidth]{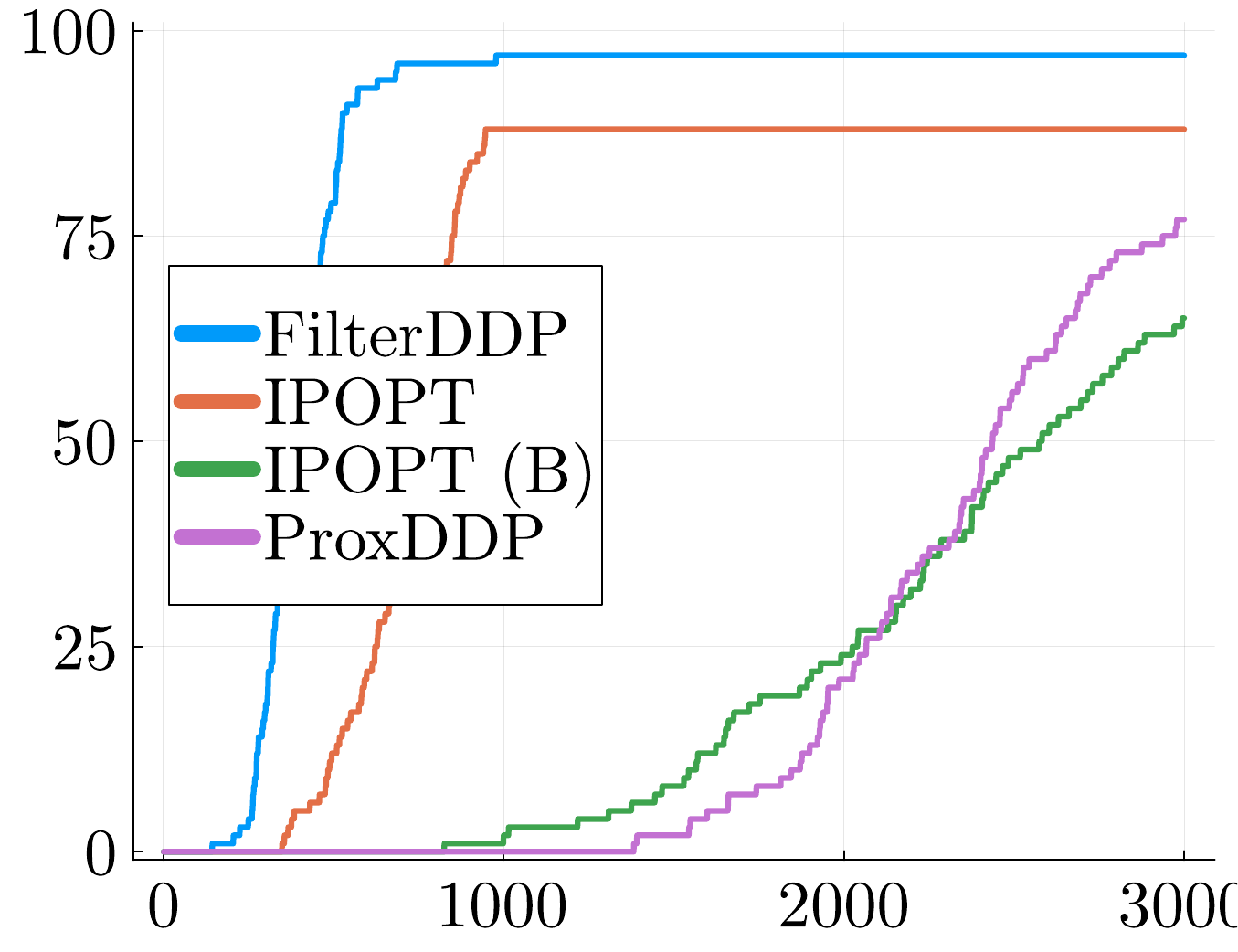}
         \caption{Manipulation}
         \label{fig:results_mani}
     \end{subfigure}
     \caption{Results for the three planning tasks. For a)-c), the x-axis represents iteration count and the y-axis is the number of OCPs which converged to the error tolerance of $10^{-7}$ for Filter DDP and IPOPT and $10^{-5}$ for ProxDDP and IPOPT (B). }
     \label{fig:results}
\end{figure*}

\begin{figure*}[t!]
     \centering
     \begin{subfigure}[b]{0.3\textwidth}
         \centering
         \includegraphics[width=\textwidth]{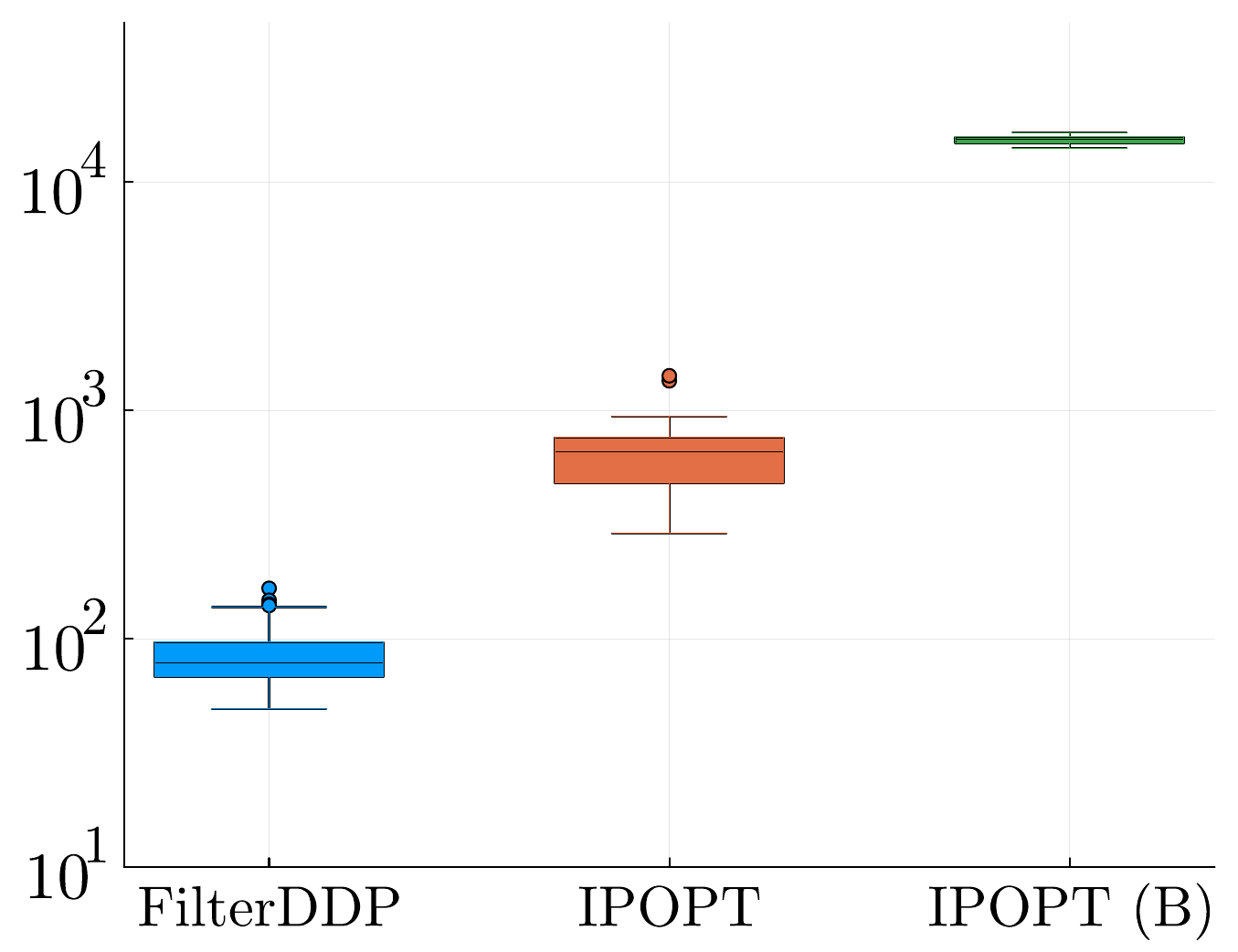}
         \caption{Acrobot Contact}
         \label{fig:time_cartpole}
     \end{subfigure}
     \hfill
     \begin{subfigure}[b]{0.3\textwidth}
         \centering
         \includegraphics[width=\textwidth]{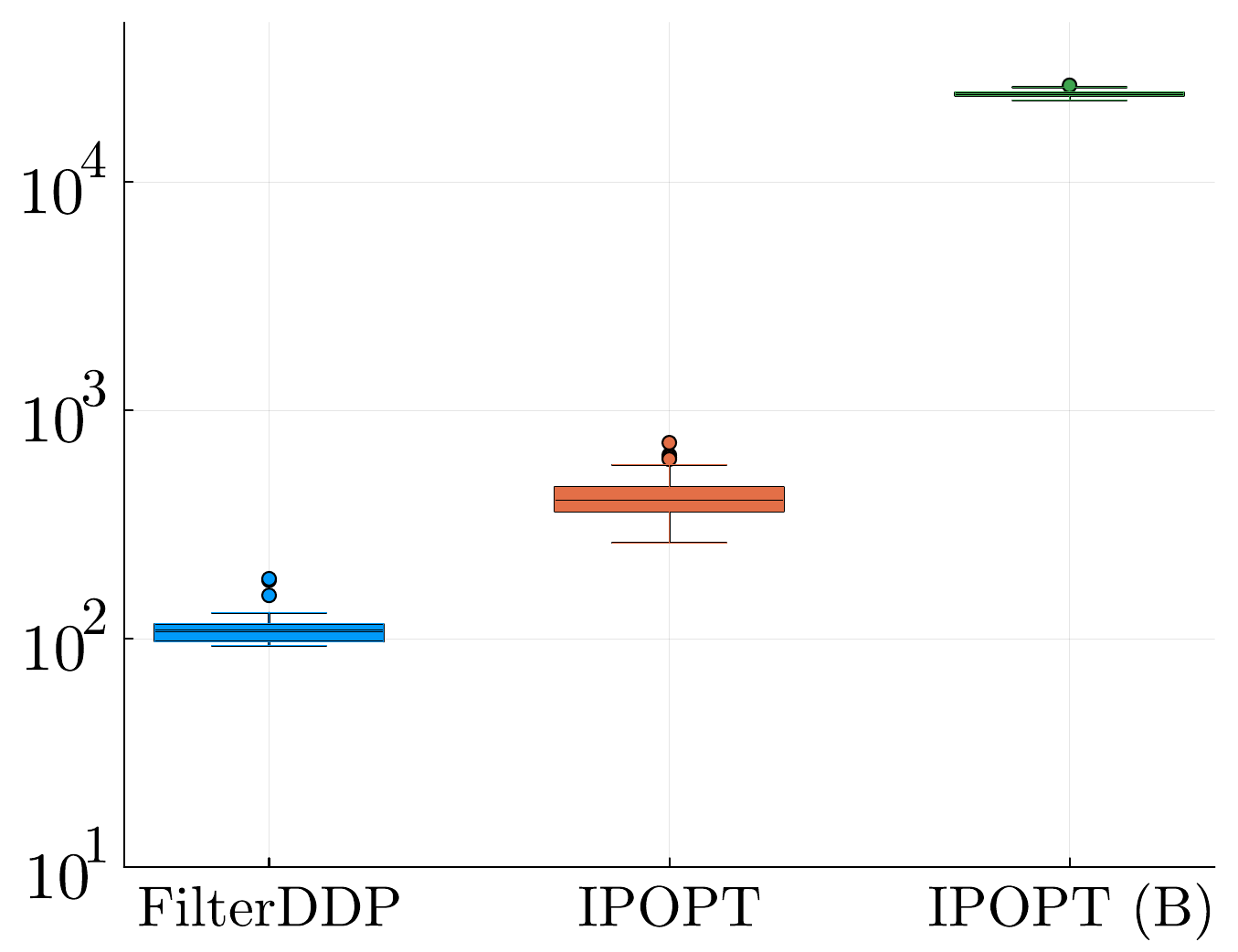}
         \caption{Cartpole Friction}
         \label{fig:time_acrobot}
     \end{subfigure}
     \hfill
     \begin{subfigure}[b]{0.3\textwidth}
         \centering
         \includegraphics[width=\textwidth]{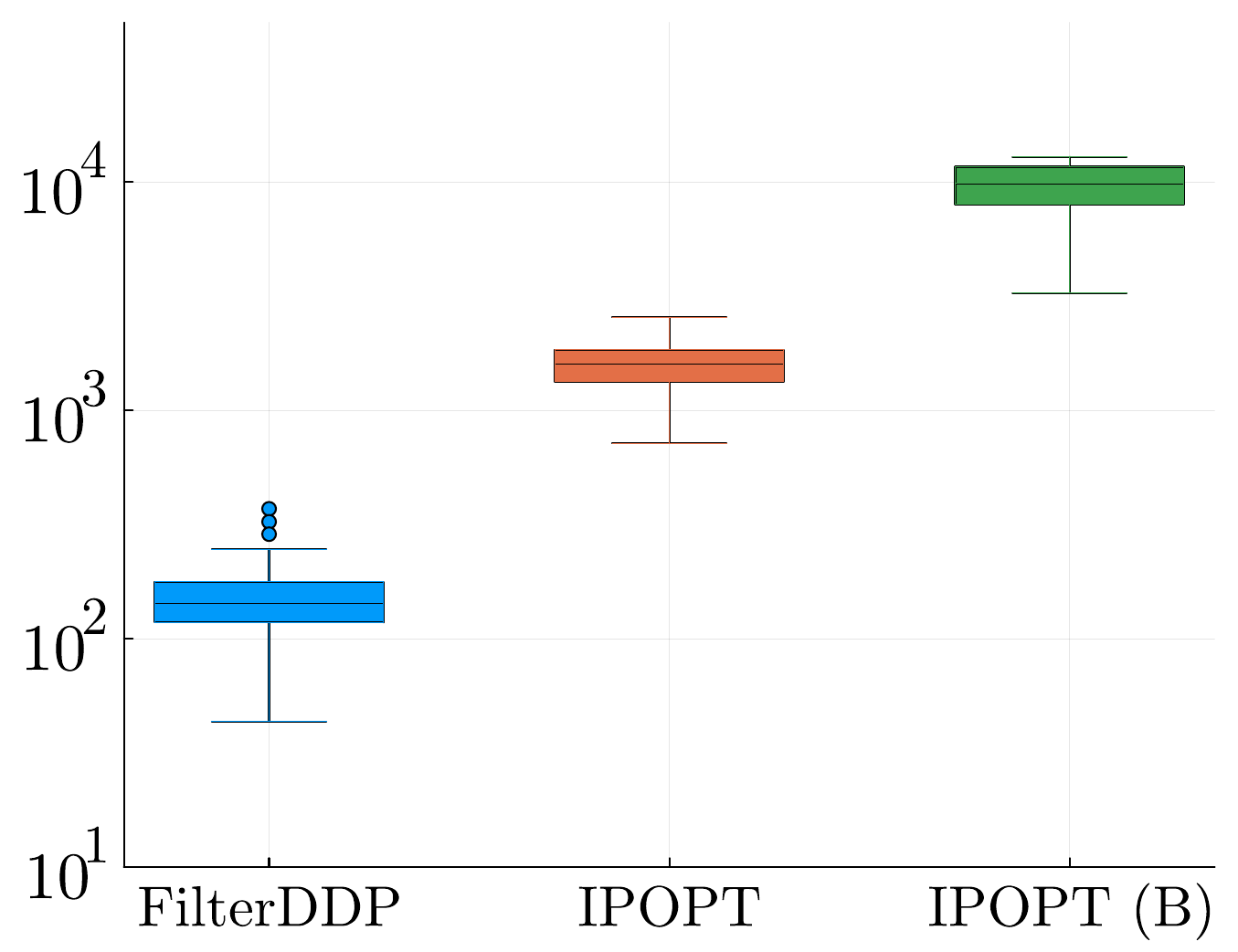}
         \caption{Manipulation}
         \label{fig:time_mani}
     \end{subfigure}
     \caption{Total wall clock (ms) for each method over each OCP class. FilterDDP attains a mean wall clock time of 14.5\% and 0.6\% of IPOPT and IPOPT (B), respectively for the acrobot problem, a mean wall clock time of 27.1\% and 4.5\% of IPOPT and IPOPT (B), respectively for the cartpole problem and finally a mean wall clock time of 10.3\% and 1.7\% of IPOPT and IPOPT (B), respectively for the manipulation problem.}
     \label{fig:results_timings}
\end{figure*}

\section{Numerical Simulations}\label{sec:experiments}

We evaluate FilterDDP on three planning tasks with nonlinear equality constraints: 1) a contact-implicit cartpole swing-up task with Columb friction, 2) a contact-implicit acrobot swing-up task with hard joint limits and, 3) a contact-implicit non-prehensile manipulation task.

\subsection{Comparison Methods}

FilterDDP is compared against IPOPT~\cite{ipopt} with full second-order derivatives and an L-BFGS variant which we refer to as IPOPT (B), as well the AL-DDP algorithm ProxDDP~\cite{proxddp}. FilterDDP uses the Julia Symbolics.jl package \cite{gowda2021high}, ProxDDP uses CasADi~\cite{casadi} and IPOPT uses the JuMP.jl package \cite{Lubin2023} for computing function derivatives. For the acrobot and manipulation tasks, we implement FilterDDP using statically-sized arrays and manual implementations of linear algebra operations\footnote{https://github.com/JuliaArrays/StaticArrays.jl} (avoiding external libraries such as LAPACK \cite{lapack}), which is more efficient for small numbers of states, control variables and constraints thanks to compiler optimisations. For cartpole however, we instead implement FilterDDP using LAPACK for numerical linear algebra, which is more efficient for larger problem sizes. In our experiments, we report total wall clock time for FilterDDP and IPOPT. For ProxDDP, we do not report timing results since we used the (slower) Python interface. 

\subsection{Experimental Setup}

The solver parameters selected for FilterDDP across all experiments are identical to their IPOPT counterparts described in \cite{ipopt}, except we reduce the superlinear barrier update term to $\theta_\mu = 1.2$ and increase the default barrier parameter to $\mu_\mathrm{init} = 1$. FilterDDP are limited to 1,000 iterations while ProxDDP and IPOPT (B) are limited to 3,000 iterations. Optimality error tolerances are set to $10^{-7}$ for IPOPT and FilterDDP and $10^{-5}$ for ProxDDP and IPOPT (B). For ProxDDP, we applied a grid search per planning task to select the initial AL parameter $\mu_0$ to minimise constraint violation, while default parameters were used for FilterDDP, IPOPT and IPOPT (B). All specific parameter values are provided in the Julia code. For each task, we create 100 individual OCPs by varying the dynamics parameters (e.g., link lengths and masses for acrobot) and actuation limits. All experiments were limited to one CPU core and performed on a Lenovo ThinkPad T14s Gen 5 with an Intel® Core™ Ultra 7 155U and 32GB of RAM, running Ubuntu 24.04.3 LTS and Julia version 1.13.0-beta3 compiled using the -O3 optimisation level.

\subsection{Cartpole Swing-Up with Coulomb Friction}

The first task is the cartpole swing-up task with Coulomb friction applying to the cart and pendulum, introduced by \cite{howellopt}. Frictional forces obey the principle of maximum dissipation, and the resultant OCP is derived by a variational time-stepping framework \cite{manchestercontactto, lecleachcimpc}. Frictional forces are resolved jointly with the rigid-body dynamics by introducing constraints.

The states $x\in\mathbb{R}^4$ include the prior and current configurations of the system, while $u\in\mathbb{R}^{15}$ includes the cart actuation force, the next configuration of the system, frictional forces and finally, dual and slack variables relating to the contact constraints \cite{manchestercontactto}. For each OCP, we vary the mass of the cart and pole, length of the pole and friction coefficients. See \cite{howellopt} and the supplementary for a detailed description of the OCP.

\subsection{Acrobot Swing-Up with Joint Limits}\label{ssec:acrobot}

This task is an acrobot swing-up task, where limits at $+\pi/2$ and $-\pi/2$ are imposed on the elbow joint, introduced by \cite{howellopt}. The joint limits are enforced by an impulse, which is only applied when a joint limit is reached. Intuitively, the acrobot is allowed to ``slam" into a joint limit, and the impulse takes on the appropriate value to ensure the joint limits are not violated. Similar to cartpole, a variational time-stepping framework \cite{manchestercontactto, howellopt} is applied to encode the contact dynamics. 

The states $x\in\mathbb{R}^4$ include the prior and current configurations of the system, while $u\in\mathbb{R}^{7}$ includes the elbow torque, the next configuration, impulses at the joint limits and slack variables for the complementarity constraints \cite{manchestercontactto}. For each OCP, we vary the link lengths. See \cite{howellopt} and the supplementary for a description of the OCP. An example trajectory of impulses at the joint limits is given in Fig. \ref{fig:qual_acrobot}.

\begin{figure}[h!]
     \centering
      \includegraphics[width=0.32\textwidth]{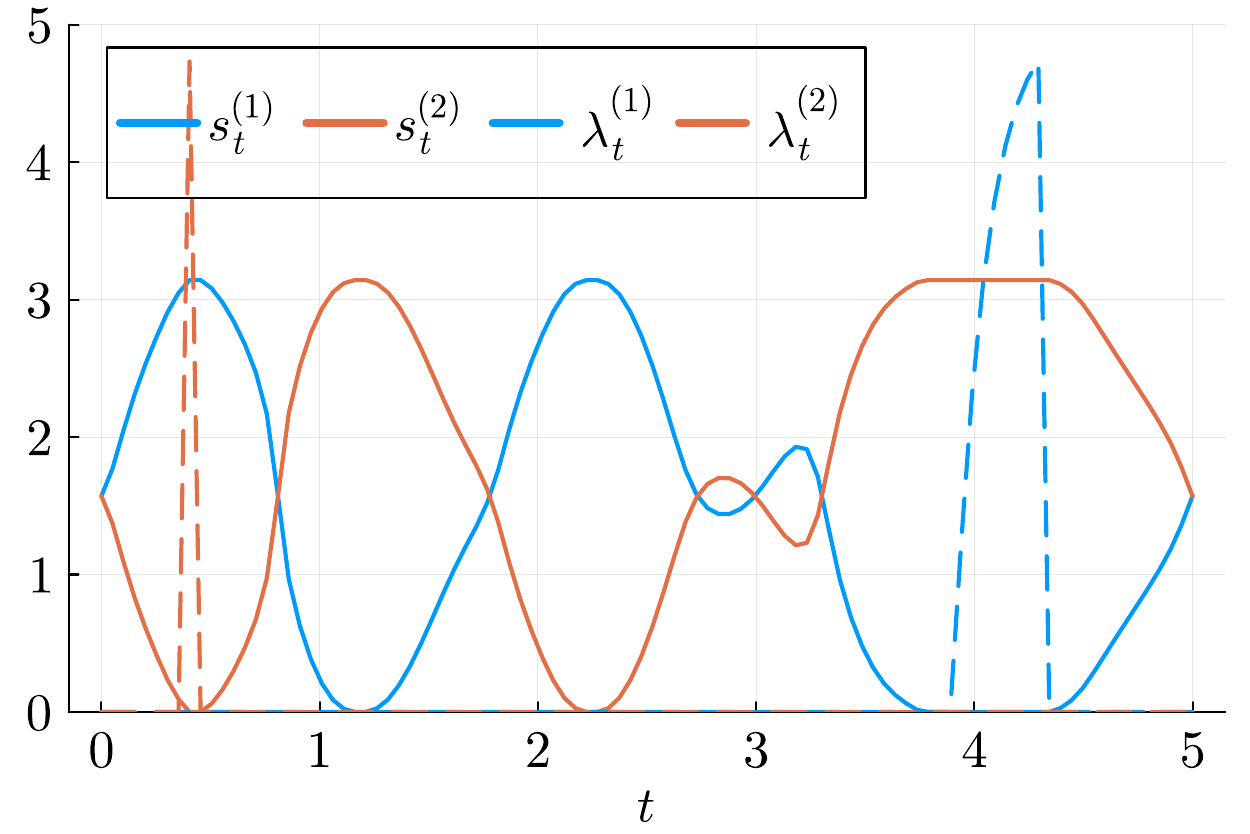}
     \caption{Acrobot example. $s_t$ is the signed distance to the $\pi/2$ and $-\pi/2$ joint limits and $\lambda_t$ denotes the impulses. Around the 4s mark, the acrobot is ``leaning into" the joint limit.}
     \label{fig:qual_acrobot}
\end{figure}

\subsection{Non-Prehensile Manipulation}

\begin{figure}[t!]
     \centering
     \includegraphics[width=0.21\textwidth]{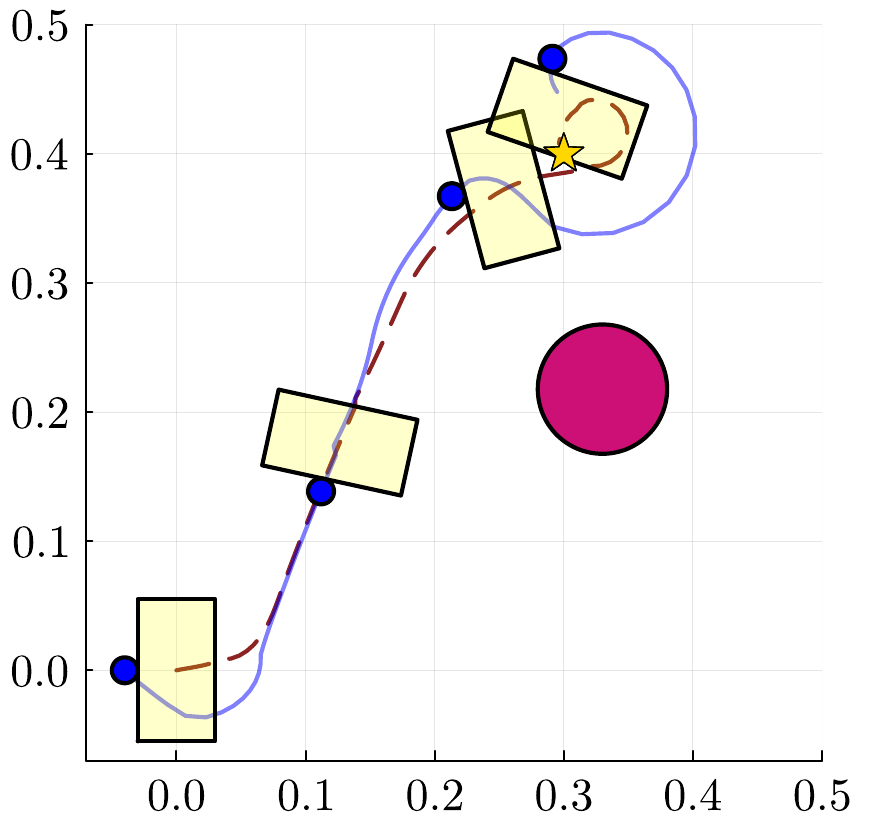}
     \includegraphics[width=0.21\textwidth]{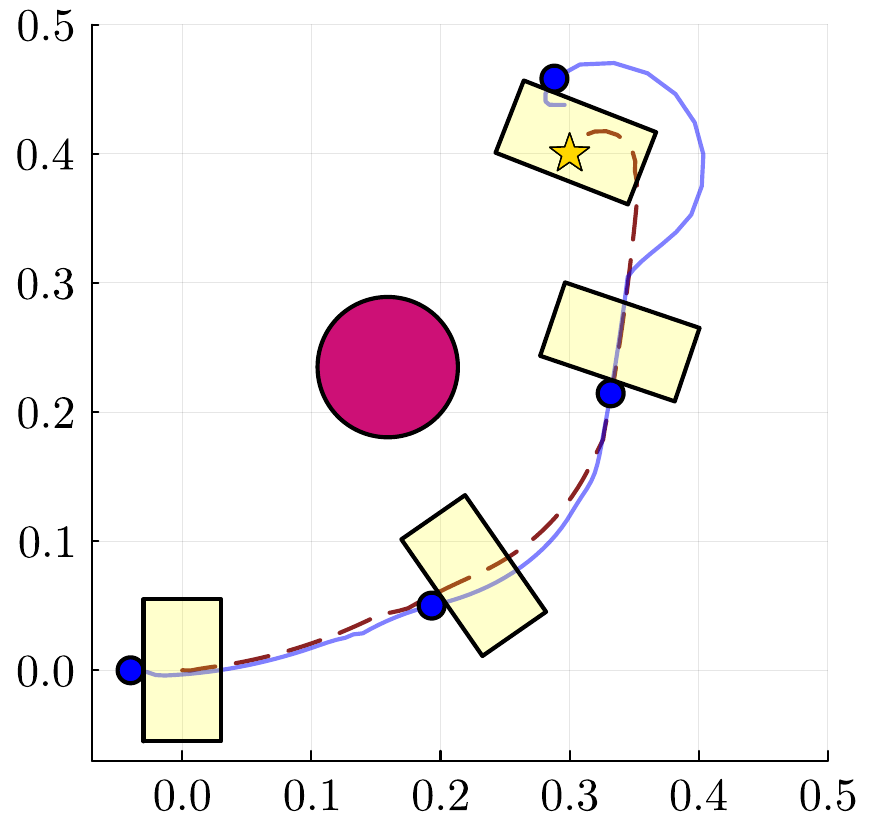}
     \caption{Pusher and slider trajectories for two instances of the non-prehensile manipulation problem. The pusher and  slider trajectories are marked by the blue and brown lines.}
     \label{fig:qual_pushing}
\end{figure}

Our final task is a non-prehensile manipulation planning task, described in detail in \cite{mouranonprehensile}, where a manipulator (the pusher) is tasked with using its end-effector to push a rectangular block (the slider) from a start to a goal configuration, while avoiding an obstacle. The dynamics of the pusher-slider problem includes multiple sticking and sliding contact modes, which are encoded through complementarity constraints. 

The states $x\in\mathbb{R}^4$ include the pose of the slider and pusher in a global coordinate frame. Controls $u\in\mathbb{R}^6$ includes contact forces between the pusher and the slider, relative pusher/slider velocity and slack variables for the obstacle avoidance and complementarity constraints. We also apply inequality constraints to bound the contact force magnitudes and pusher/slider velocity and include an obstacle avoidance constraint. The objective function is a sum of a quadratic penalty on contact force and a high weight linear penalty on the slack variables (a so-called \emph{exact} penalty \cite{manchestercontactto}). For each OCP, we vary the coefficient of friction, obstacle size and location. Fig. \ref{fig:qual_pushing} illustrates sample trajectories.

\begin{figure}[t!]
     \centering
     \begin{subfigure}[b]{0.15\textwidth}
         \centering
         \includegraphics[width=\textwidth]{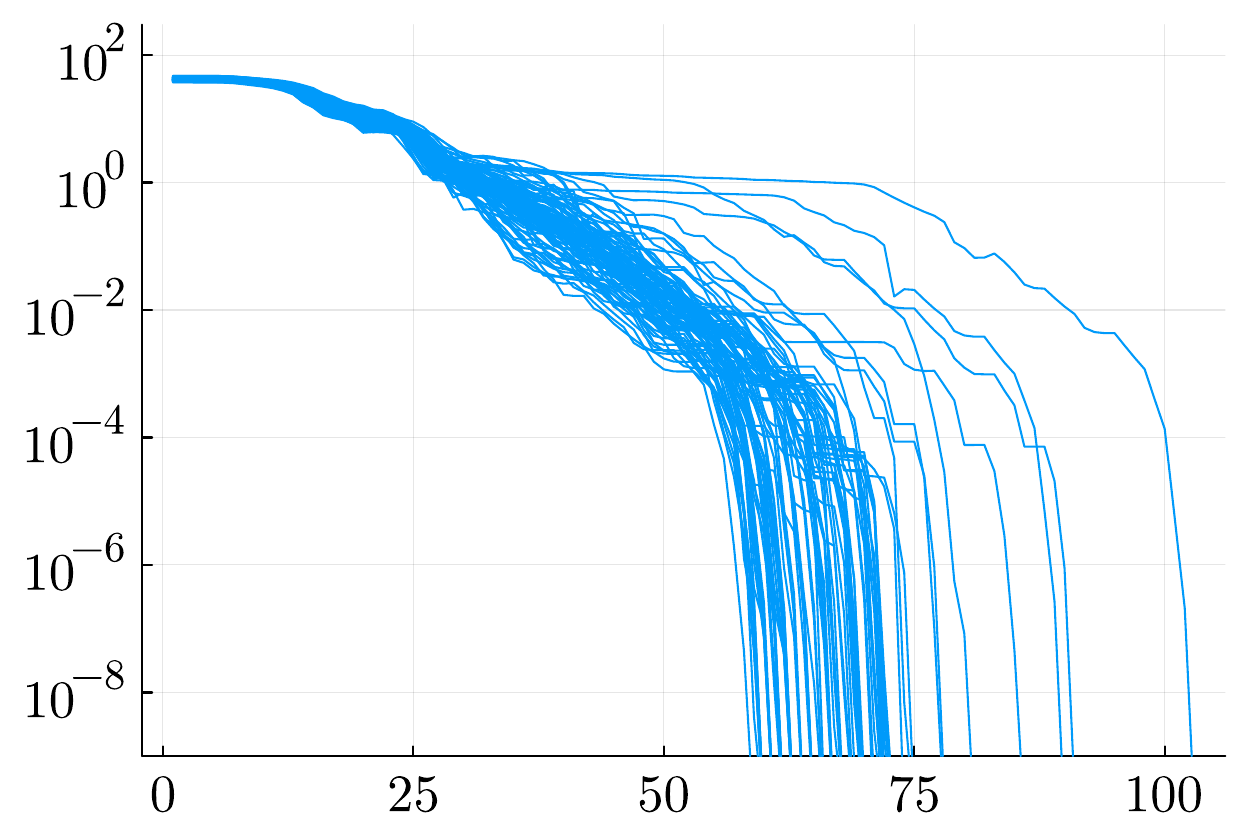}
         \caption{Cartpole Friction}
         \label{fig:cartpole_convergence}
     \end{subfigure}
     \begin{subfigure}[b]{0.15\textwidth}
         \centering
         \includegraphics[width=\textwidth]{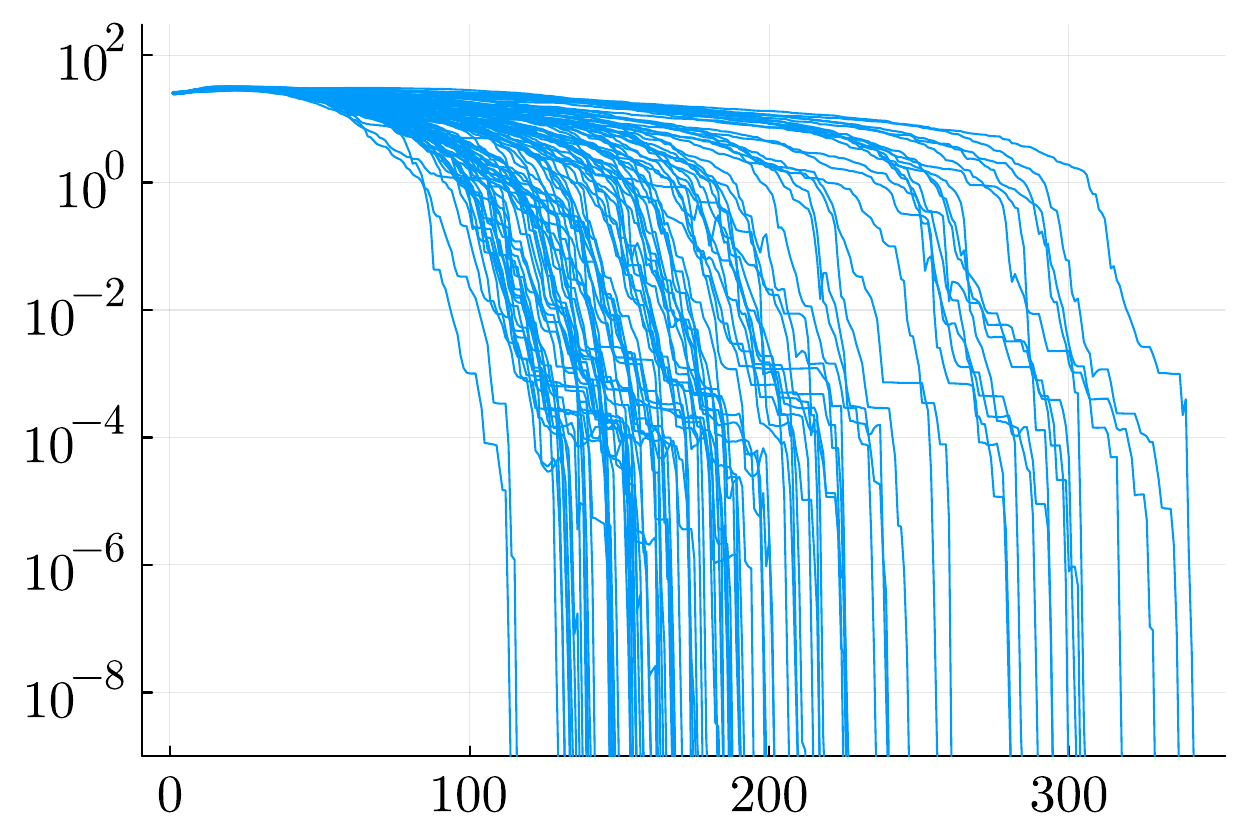}
         \caption{Acrobot Contact}
         \label{fig:acrobot_convergence}
     \end{subfigure}
     \begin{subfigure}[b]{0.15\textwidth}
         \centering
         \includegraphics[width=\textwidth]{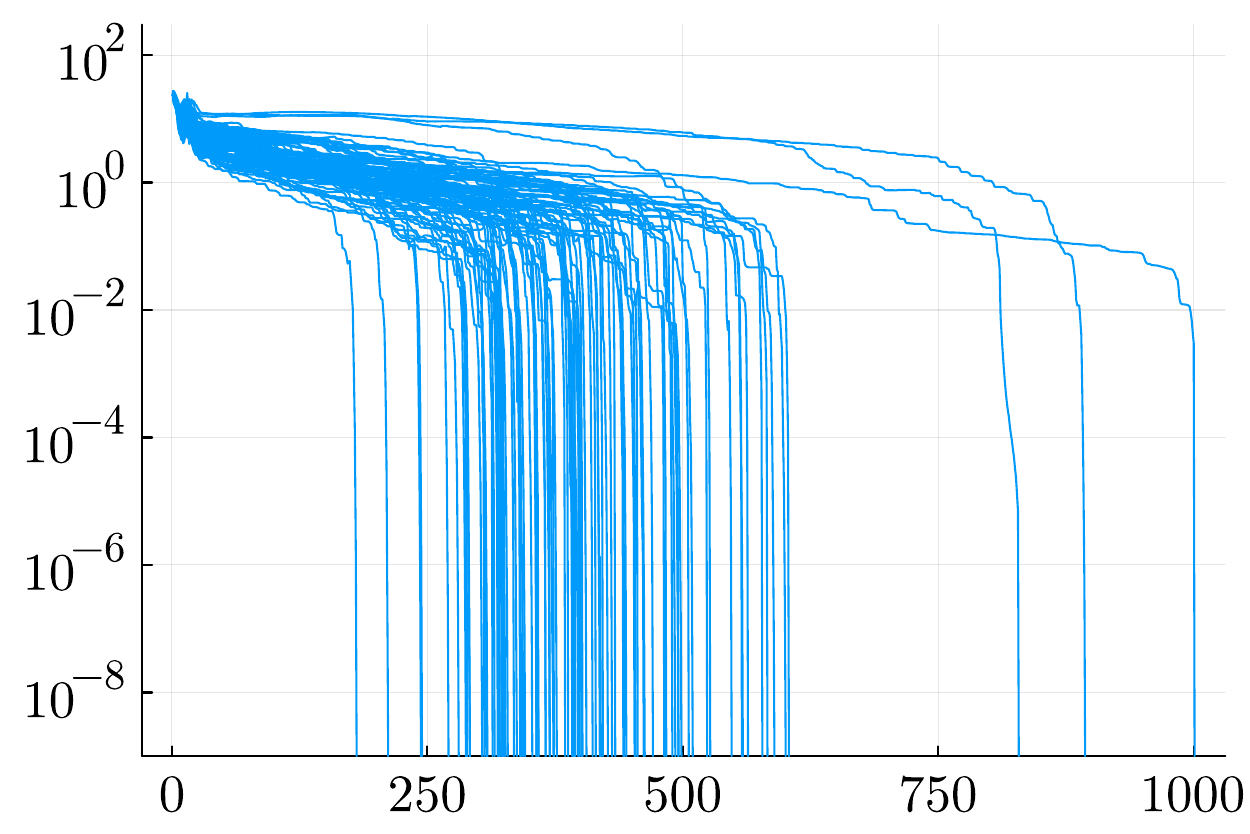}
         \caption{Manipulation}
         \label{fig:pushing_convergence}
     \end{subfigure}
     \caption{Local quadratic convergence of FilterDDP. The x-axis measures iteration count and the y-axis measures $\|\bar{u}_{1:N} - u^\star_{1:N}\|_2$, where $u^\star_{1:N}$ is the optimal point found by FilterDDP.}
     \label{fig:results_convergence}
\end{figure}

\subsection{Summary Discussion of Results}

We present the success rate per planning task across all 100 OCPs as a function of iteration count in Fig. \ref{fig:results}. In addition, we present timing results in Fig. \ref{fig:results_timings}. FilterDDP is significantly faster compared to both IPOPT and IPOPT (B), yielding average wall clock times of 10-27\% of IPOPT and 1-5\% of IPOPT (B) across the three planning tasks. Furthermore, FilterDDP attains a comparable level of robustness to IPOPT in either similar or much fewer iterations. 

Notably, the evaluated first-order methods ProxDDP and IPOPT (B) fails to solve the acrobot example entirely, with ProxDDP yielding feasible but degenerate solutions despite extensive parameter tuning, and IPOPT (B) yielding infeasible solutions. Furthermore, the iteration count for the remaining tasks for both ProxDDP and IPOPT (B) is significantly higher even with a relaxed termination tolerance, with neither method reliably converging to a local optima within a reasonable iteration count. This finding highlights the importance of using a robust second-order solver as opposed to a first-order for solving OCPs with challenging constraints.

In Tab. \ref{tab:ablation}, we report the results of ablating the two critical design choices for the FilterDDP algorithm, 1) replacing the cost with the Lagrangian for step acceptance in \eqref{eq:filtercriteria}, \eqref{eq:armijo} and, 2) replacing Hessians \eqref{eq:qfnderivs} with perturbed Hessians \eqref{eq:bwhessperturbed}. Finally, Fig.~\ref{fig:results_convergence} plots the convergence of FilterDDP for all OCPs across all planning tasks. Local quadratic convergence is observed in practice, validating the formal result in Sec.~\ref{sec:proofconvergence}. 
\begin{remark}
    We experimented with a \emph{Gauss-Newton} approximation to $H_t$ (e.g., as in \cite{mastallinullspace}), however, it is significantly less reliable than using full second-order derivatives, analogous to the performance gap between IPOPT and IPOPT (B).
    
\end{remark}

\begin{table}[h!]
\centering
\begin{tabular}{|c|c|c|c|}
\hline
                                & $\mathcal{L} \rightarrow J$  & $\lambda_t \rightarrow \bar{V}_x^t$ & $ \mathcal{L}, \lambda_t \rightarrow J, \bar{V}_x^t$ \\ \hline
Cartpole \color{blue} (I) \color{black}              & $66 \rightarrow 66$ & $66 \rightarrow 76$ &    $66 \rightarrow 77$    \\ \hline
Acrobot \color{blue} (I) \color{black} & $181 \rightarrow 179$ & $181 \rightarrow 209$ &  $181 \rightarrow 265$      \\ \hline
Manip. \color{blue} (F) \color{black}   & $1 \rightarrow 5$ & $1 \rightarrow 78$ & $1 \rightarrow 63$ \\ \hline
\end{tabular}
\caption{Ablation of algorithm design. \color{blue} (I) \color{black} and \color{blue} (F) \color{black} indicates that iteration count and no. of failures, resp. are reported.}
\label{tab:ablation}
\end{table}

\section{Conclusion and Future Work}\label{sec:conclusion}

In this paper, we have presented a line-search filter differential dynamic programming algorithm for equality constrained optimal control problems. Important design choices for the iterates and filter criterion are proposed and validated both analytically and empirically on contact-implicit trajectory planning problems arising in robotics. Furthermore, a rigorous proof of local quadratic convergence is provided, generalising prior results on unconstrained DDP \cite{liaoddplocal}. Future work will investigate applying FilterDDP to high-dimensional locomotion problems in legged robots, by integration of efficient rigid-body dynamics libraries \cite{carpentierpinocchiolibrary} and demonstrating FilterDDP in a control policy on hardware. Furthermore, formally establishing the global convergence of FilterDDP is another promising direction for future work.

\IEEEpeerreviewmaketitle




\bibliographystyle{ieeetr}
\bibliography{references}

\end{document}